\documentclass[journal,twoside,web]{ieeecolor}
\usepackage{generic}
\usepackage{amsmath,amssymb,amsfonts}
\usepackage{bm}
\usepackage{cite}
\usepackage{graphicx}
\usepackage{algorithm,algorithmic}
\usepackage{hyperref}
\hypersetup{hidelinks=true}
\usepackage{textcomp}
\def\BibTeX{{\rm B\kern-.05em{\sc i\kern-.025em b}\kern-.08em
    T\kern-.1667em\lower.7ex\hbox{E}\kern-.125emX}}
\newtheorem{theorem}{Theorem}
\newtheorem{proposition}{Proposition}
\newtheorem{lemma}{Lemma}
\newtheorem{corollary}{Corollary}

\newtheorem{definition}{Definition}
\newtheorem{remark}{Remark}
\newtheorem{example}{Example}

\newcommand{\Ccl}{\mathcal{C}_{\mathrm{cl}}}
\newcommand{\Grad}{\mathcal{G}}
\newcommand{\R}{\mathbb{R}}
\newcommand{\T}{\mathcal{T}}
\newcommand{\Crit}{\operatorname{Crit}}
\newcommand{\inner}[2]{\langle #1,\,#2\rangle}

\graphicspath{{./figures/}}
\DeclareGraphicsExtensions{.pdf,.eps}

\begin{document}

\title{First-Order Optimization as Minimum-Time Control}

\author{Liraz Mudrik, \IEEEmembership{Member, IEEE}, Isaac Kaminer, and Pramod P.~Khargonekar, \IEEEmembership{Life Fellow, IEEE}
\thanks{This work was supported in part by the Office of Naval Research Science of Autonomy Program under Grant No.\ N0001425GI01545 and Consortium for Robotics Unmanned Systems Education and Research at the Naval Postgraduate School.}
\thanks{L.~Mudrik is with the Stephen B.\ Klein Faculty of Aerospace Engineering, Technion---Israel Institute of Technology, Haifa, 3200003, Israel (e-mail: sliraz@technion.ac.il).}
\thanks{I.~Kaminer is with the Department of Mechanical and Aerospace Engineering, Naval Postgraduate School, Monterey, CA, 93943, USA (e-mail: kaminer@nps.edu).}
\thanks{P.~Khargonekar is with the Department of Electrical
Engineering and Computer Science, Henry Samueli School of Engineering,
University of California, Irvine, CA, 92697, USA (e-mail: pramod.khargonekar@uci.edu).}}

\maketitle

\begin{abstract}
We formulate first-order optimization as a minimum-time control problem. The
iterate is the state, the update, a combination of the gradients observed so
far, is the control, and the points with gradient norm at most a prescribed
tolerance form the target set. For a fixed objective and start, the minimum
number of oracle queries needed to reach the target is a value function: it
measures the complexity of the instance, not the worst case over a class. On a strongly convex quadratic, the
conjugate gradient iterates emerge from the discrete Pontryagin conditions, and
the value is a controllability index. Beyond the quadratic, a Hessian-generated
reachable span replaces the controllability matrix, and feasibility becomes a
reachability question: whether a critical point can be reached at all, and in
how many steps, is read from the span. Curvature is a resource: a checkable
condition certifies that an instance needs fewer steps than the controllability
index of its quadratic model at the minimizer, and the gap can grow without
bound with the dimension. The value is thus a benchmark for the intrinsic
difficulty of each instance, against which any first-order method can be
measured.
\end{abstract}

\begin{IEEEkeywords}
Optimization algorithms, optimal control, minimum-time control, oracle complexity, controllability, conjugate gradient.
\end{IEEEkeywords}

\section{Introduction}
\label{sec:intro}

The complexity of first-order optimization is classically studied in the worst case over a function class. The information-based lower bounds of Nemirovski and Yudin~\cite{NemirovskiYudin1983} and the construction in Nesterov's lectures~\cite{Nesterov2018} both reason about a single structural object: any first-order method produces iterates lying in the affine span of the starting point and the gradients observed so far, and one then exhibits a worst-case instance on which no method in this span class can converge quickly; for oblivious methods, whose schedule is fixed in advance, these bounds hold independently of the dimension~\cite{ArjevaniShamir2016}. Performance estimation~\cite{DroriTeboulle2014,TaylorHendrickxGlineur2017} and the optimized gradient method~\cite{KimFessler2016} make the worst case exact by computing a minimax-optimal method over a class.

A parallel line of work recasts optimization algorithms as dynamical systems or feedback loops and certifies their behavior with control-theoretic tools. We organize this body of work, and our own contribution, into three levels.
At the first level, one analyzes a given algorithm: integral quadratic constraints (IQCs) furnish a semidefinite criterion that certifies a convergence rate for a fixed method~\cite{LessardRechtPackard2016,FazlyabRibeiro2018}, dissipativity and Lyapunov arguments recover the same guarantees in a state-space form~\cite{HuLessard2017}, continuous-time and variational models explain acceleration as a property of an underlying flow~\cite{SuBoydCandes2016,WibisonoWilsonJordan2016,MuehlebachJordan2021}, and the algorithm-as-controller viewpoint makes the feedback interpretation explicit~\cite{Hauswirth2024,BhayaKaszkurewicz2006}. At the second level, one synthesizes an algorithm optimal in the worst case over a class: performance estimation casts the worst-case error of a method as a semidefinite program and optimizes the method's coefficients against it~\cite{DroriTeboulle2014,TaylorHendrickxGlineur2017,KimFessler2016}, and IQC-based synthesis designs accelerated methods with certified rates~\cite{SchererEbenbauer2021,GramlichEbenbauerScherer2022}; the object produced is a single method tuned to the hardest instance in the class.

The subject of this paper, which is the third level, is to study the per-instance complexity by a minimum-time criterion. We fix a single objective $f$ and a single starting point $z_0$ and ask: over the full first-order span class, what is the least number of oracle queries needed to drive the gradient to within a prescribed tolerance of zero? The question is instance-specific rather than worst-case and minimum-time rather than asymptotic-rate or Lyapunov, and we formulate it as the value function of a discrete-time minimum-time optimal control problem, in which the iterate is the state, the span coefficients are the control, and the discrete Pontryagin minimum principle~\cite{Naidu2003,Bertsekas2017} characterizes the optimal schedule. A distinct control-theoretic program reformulates the optimization problem itself as an optimal control problem and synthesizes a runnable method from the maximum principle: Zhang et al.~\cite{ZhangWangXuGuo2024} recover gradient descent and Newton's method as special cases, a successor establishes a superlinear rate for the resulting method~\cite{WangXuGuoZhang2026}, and Ross derives gradient, Newton, and coordinate-descent methods from a control Lyapunov function along an optimal-control trajectory~\cite{Ross2019,Ross2023}. Those works design an algorithm; we place a minimum-time cost on the iteration count, so the value function is a per-instance complexity measure read through controllability, and the use of a minimum-time count in place of a fixed-horizon cost is the principal formal difference. On a strongly convex quadratic the conjugate gradient iterates~\cite{HestenesStiefel1952} emerge from our optimality system, rather than gradient descent or Newton's method.

Read this way, the per-instance count is governed by the reachable structure the instance carries rather than by its conditioning, and it admits a computable certificate: the value is at most the dimension of the reachable gradient span, evaluable from first-order information alone, and for a generalized linear model this is the rank of the data, independent of the ambient dimension. The gap to worst-case analysis is thus quantitative: the value reflects the spectral structure the instance presents rather than its conditioning, so on a clustered spectrum it can fall far below the query count a worst-case-optimal method must pay on the same instance. Our contribution is to obtain this beyond the quadratic: for twice-differentiable objectives the value is governed by a reachable gradient span and a discrete accessibility-type condition~\cite{JakubczykSontag1990,AlbertiniSontag1993}, and curvature can lower the per-instance count below the matched controllability index. The quadratic case is the recognizable specialization, where the reachable set is a Krylov subspace, the value equals the controllability index of a single-input system, and conjugate gradient attains it. The resulting three-way correspondence among the first-order span, the Krylov subspace, and the controllable subspace~\cite{Kalman1963,Sontag1998} is well known in numerical linear algebra~\cite{GolubVanLoan2013,Saad2003} and model reduction~\cite{Antoulas2005}, though rarely stated as a single object. The viewpoint connects to instance-optimal and instance-faster first-order methods: for convex quadratics conjugate gradient is instance-optimal for the function value, realized equivalently by an adaptive heavy-ball method with Polyak step sizes~\cite{GoujaudTaylorDieuleveut2024}, and adaptive methods attain instance-faster rates under refined curvature conditions~\cite{LiuFang2023}; those works synthesize runnable adaptive methods measured by convergence rate, while our quadratic correspondence is the control-theoretic reading of the same instance-optimality and our content beyond the quadratic is the reachability characterization.

The minimum-time value function with a target is, by construction, an object that presumes knowledge of the instance and of its target; this is standard in minimum-time control, where the value function and the implementable feedback law are distinct~\cite{BardiCapuzzoDolcetta1997}, and we use it as a benchmark rather than a runnable algorithm. The closest precedents are continuous-time and Hamilton--Jacobi in nature: the eikonal value function of Bardi and Kouhkouh~\cite{BardiKouhkouh2023} reaches the minimizer set in finite time in some cases and is constructed without prior knowledge of the minimizer, and a finite-termination result for Morse polynomials proceeds through sum-of-squares hierarchies rather than first-order iterations~\cite{LeCongTrinh2018}.

The contributions of this paper are as follows. 
We first formulate the per-instance first-order complexity as the value function of a discrete-time minimum-time optimal control problem and derive its optimality system by dynamic programming and by the discrete minimum principle. On a strongly convex quadratic the optimality system reduces to the residual-orthogonality (Galerkin) conditions of the conjugate gradient method, so the conjugate gradient iterates emerge as the only span-class trajectory satisfying the optimality conditions, not merely as one admissible schedule (Thm.~\ref{thm:selection}). The exact-reaching value then equals the controllability index of the associated single-input pair.
Then, along the controllability axis, we characterize the value for twice-differentiable objectives by a reachability condition in which a Hessian-generated reachable span replaces the controllability matrix (Thm.~\ref{thm:controllability}), recovering the quadratic case as a specialization. The value is bounded by the dimension of the reachable structure the instance carries (Cor.~\ref{cor:certificate}), for a generalized linear model the rank of the data (Cor.~\ref{cor:glm}), independent of the ambient dimension.
Curvature is not only an obstacle but a resource: we show it can drive the per-instance count strictly below the matched controllability index (Thm.~\ref{thm:curvature}). On a reachable structure of dimension $d$, a checkable transversality condition on a coplanarity determinant produces a start that reaches its target in $d-1$ steps where the matched quadratic requires $d$, on an open set of starts. The reduction is by exactly one for $d\in\{3,4\}$; in higher dimension it compounds to a floor of order $\sqrt{2d}$ steps, a gap below the controllability index that grows without bound. This is a per-instance phenomenon with no worst-case analogue: the classical first-order lower bounds are saturated by quadratics (Remark~\ref{rem:minimax}), the very instances that carry the largest per-instance count, so a reduction available only away from the quadratic is invisible to the worst-case account.

The remainder of this paper is organized as follows. Section~\ref{sec:formulation} fixes the first-order method class and the minimum-time value. Section~\ref{sec:solving} derives the optimality system by dynamic programming and the discrete Pontryagin principle and states the benchmark procedure it defines. Section~\ref{sec:necessary} specializes it to the strongly convex quadratic, where conjugate gradient emerges, the value is a controllability index, and the relation to worst-case complexity is recorded. Section~\ref{sec:reach} gives the reachability test for twice-differentiable objectives and the curvature shortcut, and Sec.~\ref{sec:conclusion} concludes. Longer proofs are deferred to the appendix in order of appearance; proofs that are immediate are given with their statements.

\section{Problem Formulation}
\label{sec:formulation}

Let $f:\R^n\to\R$ be continuously differentiable with a global minimizer $z^\star$, not assumed unique, and minimum value $f^\star=f(z^\star)$. A first-order method queries an oracle that returns, at any point $z$, the pair $(f(z),\nabla f(z))$; the oracle is exact. An instance is one objective paired with one start, $(f,z_0)$; worst-case analysis ranges over a class of objectives~\cite{NemirovskiYudin1983,Nesterov2018}, and we fix a single instance. The oracle complexity of a method on an instance is the number of queries it makes.

\subsection{The first-order method class}

A method in the full first-order class $\Ccl$ generates iterates of the form
\begin{equation}
\label{eq:fo-class}
z_{k+1}=z_0-\sum_{j=0}^{k}c_{k,j}\,\nabla f(z_j),\qquad c_{k,j}\in\R,
\end{equation}
where the coefficient array $\{c_{k,j}\}$ is the control variable of the minimum-time problem posed next. The class $\Ccl$ is the set of all iterations of the form~\eqref{eq:fo-class}; a coefficient array chosen out to some horizon is a schedule. Writing $g_j=\nabla f(z_j)$ for the observed gradients, every iterate of~\eqref{eq:fo-class} lies in the affine slice
\begin{equation}
z_K\in z_0-S_K(z_0),\qquad S_K(z_0)=\operatorname{span}\{g_0,\dots,g_{K-1}\},
\label{eq:reachspan}
\end{equation}
and we call $S_K(z_0)$ the $K$-step reachable span from $z_0$. Equation~\eqref{eq:reachspan} is the linear-span condition underlying the worst-case information-based lower bounds~\cite{NemirovskiYudin1983,Nesterov2018}. Gradient descent, the heavy-ball and Nesterov methods, and conjugate gradient are particular choices of the coefficient array. The value defined next is a benchmark: its coefficients may depend on the whole instance, with no restriction on their sign or size. A method that can be run chooses each coefficient from the data observed so far.

\subsection{The minimum-time value function}

Fix a tolerance $\varepsilon>0$. The target is the gradient-tolerance set
\begin{equation}
\label{eq:target}
\T_\varepsilon=\{z\in\R^n:\ \|\nabla f(z)\|\le\varepsilon\},
\end{equation}
and for $\varepsilon=0$ it is the critical set $\T_0=\Crit(f)=\{z\in\R^n:\nabla f(z)=0\}$. Of the three natural targets, gradient norm, function gap, and distance to a minimizer, the gradient norm is the only one computable from the oracle alone, and driving it below a tolerance is the standard stopping rule of first-order optimization. The target is intrinsic in this sense: an iterate certifies its own arrival, with no knowledge of $z^\star$ or $f^\star$.

\begin{definition}[Minimum-time value]
\label{def:value}
The minimum-time value of $z_0$ over $\Ccl$ is
\begin{equation}
\label{eq:value}
V(z_0)=\min\Big\{K\in\mathbb{N}:\ \exists\,\{c_{k,j}\}_{0\le j\le k<K}\ \text{with}\ z_K\in\T_\varepsilon\Big\},
\end{equation}
with $V(z_0)=0$ if $z_0\in\T_\varepsilon$ and $V(z_0)=\infty$ if no finite schedule reaches $\T_\varepsilon$. The exact-reaching value $V_0(z_0)$ is the same minimum with $\T_0$ in place of $\T_\varepsilon$.
\end{definition}

The quantity $V(z_0)$ is the per-instance oracle complexity of driving the gradient below $\varepsilon$: the fewest oracle queries achievable by a schedule of the form~\eqref{eq:fo-class} whose coefficients are chosen with full knowledge of the instance. Since every critical point lies in $\T_\varepsilon$, $V(z_0)\le V_0(z_0)$ for every $\varepsilon>0$, with equality for all sufficiently small $\varepsilon$. Whether any finite schedule reaches the target at all is not automatic; deciding it, instance by instance, is the feasibility half of the problem.

\begin{example}[Gradient descent under gradient dominance]
\label{ex:pl}
Any runnable schedule bounds the value from above; under gradient dominance the bound is explicit. If $f$ has $L$-Lipschitz gradient and satisfies the Polyak--{\L}ojasiewicz (gradient dominance) inequality $\tfrac12\|\nabla f(z)\|^2\ge\mu\,(f(z)-f^\star)$ for some $\mu>0$~\cite{Polyak1963,KarimiNutiniSchmidt2016}, as every $\mu$-strongly convex $f$ does, then the constant-step iteration $z_{k+1}=z_k-\tfrac1L\nabla f(z_k)$ reaches $\T_\varepsilon$ from observed gradients alone, with no inner minimization and no knowledge of $z^\star$, so
\begin{equation}
V(z_0)\le\Bigl\lceil\tfrac{L}{\mu}\log\bigl(2L\,(f(z_0)-f^\star)/\varepsilon^2\bigr)\Bigr\rceil.
\end{equation}
Reaching $\T_\varepsilon$ then places the iterate within $\varepsilon^2/(2\mu)$ of $f^\star$ and, for $\mu$-strongly convex $f$, within $\varepsilon/\mu$ of $z^\star$: on this class the intrinsic target certifies proximity to the minimizer, not only stationarity.
\end{example}

\section{The Optimality System of the Minimum-Time Problem}
\label{sec:solving}

We characterize the optimal solution by two routes. Dynamic programming yields the value function and a feedback law, a map assigning a coefficient choice to each state, over all initial conditions; the discrete Pontryagin minimum principle, applied at each fixed horizon, yields the optimal open-loop schedule from a given start. Off the quadratic the inner problem is nonconvex, so the conditions derived by either route are necessary only: they characterize an optimal schedule when one exists, and they do not by themselves decide whether any schedule reaches the target.

\subsection{Dynamic programming}
\label{subsec:dp}

Treating~\eqref{eq:value} as a minimum-time problem on the lifted state $(z,G)$, where $G=[\,g_0,\ \dots,\ g_{k-1}\,]$ collects the gradients observed so far, the principle of optimality gives the Bellman recursion
\begin{equation}
\label{eq:bellman}
V(z)=
\begin{cases}
0, & z\in\T_\varepsilon,\\[2pt]
1+\displaystyle\min_{c}\,V\!\big(z-c^\top G\big), & z\notin\T_\varepsilon,
\end{cases}
\end{equation}
where $c$ collects the corresponding coefficients; we abbreviate the value as $V(z)$, its dependence on the observed gradients suppressed to keep the notation light. The value function~\eqref{eq:bellman} is integer-valued and, by the principle of optimality, equals~\eqref{eq:value}: $V(z_0)$ is the minimum number of stages to reach $\T_\varepsilon$. The associated feedback law is
\begin{equation}
\label{eq:feedback}
\mu(z)\in\arg\min_{c}\,V\!\big(z-c^\top G\big).
\end{equation}
Dynamic programming returns a feedback law $\mu$ defined over all states, at the cost of solving~\eqref{eq:bellman} on the lifted state space, which is practical only in special low-dimensional cases; we use the dynamic-programming route conceptually, and the computations in this paper all run through the Pontryagin route.

\begin{remark}[The optimal feedback law]
\label{rem:feedback}
A coefficient $c$ is minimum-time optimal at $z$ exactly when it lands the successor $z-c^\top G$ in the set where the value equals $V(z)-1$, and more than one coefficient can do so; this is why~\eqref{eq:feedback} is set-valued. The number of steps from $z$ is therefore unique; more than one schedule can realize it. Selecting from~\eqref{eq:feedback} a coefficient of least magnitude gives a single-valued, piecewise smooth law. The selection jumps at states where two distinct landing points achieve the minimum in~\eqref{eq:bellman}, the selection switching from one landing point to the other. The law is defined at every state, whether or not an optimal trajectory passes there; at a state near the set where $V=\infty$ the target is barely reachable, and the selection can grow without bound, ever larger coefficients being needed to land in a set of finite value; any implementation of the feedback near that set inherits the blow-up as ill-conditioning.
\end{remark}

\subsection{The Pontryagin route}
\label{subsec:pmp}

Fix a horizon $K$ and define the inner problem
\begin{equation}
\label{eq:inner}
J^\star(K;z_0)=\inf_{\{c_{k,j}\}}\ f(z_K)\quad\text{s.t.}\quad\eqref{eq:fo-class},
\end{equation}
the best terminal value achievable in exactly $K$ steps, an infimum attained in every case computed in this paper: a fixed-horizon discrete optimal control problem, to which the discrete Pontryagin minimum principle applies~\cite{Naidu2003}, inside the outer minimum-time search over the horizon. The outer search presumes that some horizon reaches the target, the feasibility question posed in Sec.~\ref{sec:formulation}. Throughout this subsection $k$ indexes the stage, $j\le k$ the gradient $g_j=\nabla f(z_j)$ observed at stage $j$ and reused at stage $k$, and $p_{k+1}$ the costate adjoined to the update from $z_k$ to $z_{k+1}$. We adjoin the dynamics~\eqref{eq:fo-class} with these costates and form, at each stage, the Hamiltonian
\begin{equation}
\label{eq:ham}
H_k=\Big\langle p_{k+1},\ z_0-\sum_{j\le k}c_{k,j}\,\nabla f(z_j)\Big\rangle.
\end{equation}
Since the coefficient $c_{k,j}$ enters only the definition of $z_{k+1}$, with $\partial z_{k+1}/\partial c_{k,j}=-g_j$, the stationarity is
\begin{equation}
    \frac{\partial J}{\partial c_{k,j}}=-\inner{p_{k+1}}{g_j}=0,
\label{eq:stat}
\end{equation}
the costates are
\begin{equation}
    p_K=\nabla f(z_K),\quad
    p_m=\nabla^2 f(z_m)\!\!\sum_{r=m}^{K-1}\!(-c_{r,m})\,p_{r+1},
\label{eq:costate}
\end{equation}
where $p_m=\partial J/\partial z_m$ is the sensitivity of the terminal cost to the state, computed by~\eqref{eq:costate} as a backward sweep. The inner problem~\eqref{eq:inner} is a free terminal minimization, so the costate~\eqref{eq:costate} terminates at the cost gradient with no transversality multiplier; the outer horizon search carries the minimum-time structure, kept transverse by $\varepsilon>0$. Exact reaching, $\varepsilon=0$, forces $\nabla f(z_K)=0$ and collapses the terminal adjoint. Since the inner problem minimizes the function value~\eqref{eq:inner}, the exact-reaching value $V_0(z_0)$ is the first horizon at which the terminal gradient $\nabla f(z_K)$ can be driven to zero, and $V(z_0)$ its small-$\varepsilon$ counterpart; the function-value and gradient-norm stopping criteria differ for $\varepsilon>0$; they coincide as $\varepsilon\downarrow0$ when every reachable critical point is a global minimizer, as on the quadratic.

The recursion~\eqref{eq:costate} is non-local: $p_m$ depends on every later costate, not on $p_{m+1}$ alone, because the gradient $g_m$ observed at stage $m$ is reused at every later stage of~\eqref{eq:fo-class}. A state-space plant has a one-step adjoint recursion; here gradient reuse couples every stage, so the plant carries the whole gradient history rather than a current state. The terminal condition $p_K=\nabla f(z_K)$ with~\eqref{eq:stat} states that the final gradient is orthogonal to every previously observed gradient, the Galerkin condition of conjugate-direction methods and, equivalently, the stationarity certificate for~\eqref{eq:target}: it vanishes once the explored subspace meets the critical set.

The Hessian in~\eqref{eq:costate} does not make the benchmark second-order. The residuals the benchmark solves are the terminal inner products $\inner{g_K}{g_j}$ of~\eqref{eq:stat} with $p_K=g_K$, evaluated from oracle outputs alone; no Hessian is formed, its dependence absorbed into differences of observed gradients, exactly as conjugate gradient realizes conjugacy through exact line search.

To verify~\eqref{eq:costate}, assume $f\in C^2$ and induct downward on $m$; the recursion then holds at every horizon. For $m=K$, $J=f(z_K)$ gives $p_K=\nabla f(z_K)$. The node $z_m$ enters the dynamics~\eqref{eq:fo-class} only through its gradient $g_m=\nabla f(z_m)$, which appears in each later iterate $z_{r+1}$, $r=m,\dots,K-1$, with coefficient $-c_{r,m}$ and nowhere else; a perturbation $\delta z_m$ changes $g_m$ by $\nabla^2 f(z_m)\,\delta z_m$ and propagates to each later node as $\delta z_{r+1}=-c_{r,m}\nabla^2 f(z_m)\,\delta z_m$. Summing the downstream sensitivities $p_{r+1}=\partial J/\partial z_{r+1}$ by the chain rule, and using symmetry of the Hessian,
\begin{equation}
p_m=\sum_{r=m}^{K-1}\Bigl(\frac{\partial z_{r+1}}{\partial z_m}\Bigr)^{\!\top}\!p_{r+1}
=\nabla^2 f(z_m)\sum_{r=m}^{K-1}(-c_{r,m})\,p_{r+1},
\label{eq:costateproof}
\end{equation}
a perturbation $\delta z_m$ reaches $z_{r+1}$ only through the single direct term $-c_{r,m}g_m$, so the chain rule counts each path exactly once; this is~\eqref{eq:costate}.

\subsection{The value as a computable benchmark}
\label{subsec:benchmark}
Every benchmark value reported in this paper is computed by Algorithm~\ref{alg:benchmark}: an ascending sweep over the horizon, with a nonlinear least-squares solve of the optimality system at each fixed horizon, so that the first certified horizon is the minimum, the value itself. The sweep may start at any $K_{\min}$ known to satisfy $K_{\min}\le V(z_0)$ without affecting this reading, and from a larger $K_{\min}$ the returned horizon is still a certified upper bound; $K_{\min}=1$ is always safe and is used throughout this paper, the small-horizon solves being cheap, the coefficient array at horizon $K$ having $K(K+1)/2$ entries. The residuals are the terminal stationarity inner products $\langle g_K,g_j\rangle$, evaluated from observed gradients alone (Sec.~\ref{subsec:pmp}).

\begin{algorithm}[t]
\caption{Benchmark computation of the value}
\label{alg:benchmark}
\begin{algorithmic}[1]
\REQUIRE oracle access to $f$ and $\nabla f$; start $z_0$; tolerance $\varepsilon\ge0$; horizons $1\le K_{\min}\le K_{\max}$
\IF{$\|\nabla f(z_0)\|\le\varepsilon$}
\RETURN the horizon $0$ and the empty schedule
\ENDIF
\FOR{$K=K_{\min},K_{\min}+1,\dots,K_{\max}$}
\STATE solve~\eqref{eq:stat} and~\eqref{eq:costate}, under the dynamics~\eqref{eq:fo-class}, as a nonlinear least-squares problem in the coefficient array $\{c_{k,j}\}_{0\le j\le k<K}$, with residuals $\inner{g_K}{g_j}$, $j=0,\dots,K-1$ (observed gradients only; no Hessian is formed)
\IF{$\|\nabla f(z_K)\|\le\varepsilon$ at the solution}
\RETURN the horizon $K$ and the realizing schedule $\{c_{k,j}\}$
\ENDIF
\ENDFOR
\RETURN no certificate up to $K_{\max}$
\end{algorithmic}
\end{algorithm}

The horizon returned is the benchmark and the schedule its realization: on the strongly convex quadratic the inner problem is convex, the solve reproduces the conjugate gradient iterates, and, as the next section shows, the value is exact; off the quadratic the solve returns an extremal and a certified upper bound on $V_0(z_0)$. Every runnable first-order method, whether its coefficients are fixed in advance or chosen from observed data, executes one coefficient schedule of~\eqref{eq:fo-class}, so its oracle count on the instance is bounded below by $V(z_0)$: the value is the best per-instance performance achievable in the class, and the gap a given method leaves to it is measured on the same instance. The benchmark is clairvoyant rather than runnable, its coefficients chosen with knowledge of the instance; on the quadratic it is nonetheless attained causally by conjugate gradient, one evaluation per step. Off the quadratic the optimal coefficients are not determined by the observed gradients alone: realizing the count would mean minimizing $f$ over the affine slice~\eqref{eq:reachspan}, a low-dimensional but generally nonconvex problem that itself consumes oracle queries; a causal method instead settles for a query count set by the conditioning, as in the gradient-descent ceiling of Example~\ref{ex:pl}. We therefore claim no transparent realization of $V$ beyond the convex regime, and take as the fair runnable comparison a fixed-coefficient method such as Nesterov's or conjugate gradient.

\section{The Quadratic Case: Conjugate Gradient Emerges from Optimality}
\label{sec:necessary}
\label{subsec:convex}

This section shows that, on a strongly convex quadratic, the conjugate gradient iterates emerge from the optimality system of Sec.~\ref{sec:solving} as the minimum-time trajectory, and that the exact-reaching value equals the controllability index of a single-input pair (Thm.~\ref{thm:selection}). For quadratics, the identification of the value with a controllability index is classical, the Krylov space being the controllable subspace~\cite{Kalman1963,Sontag1998}; what the optimality system adds is that the necessary conditions hold along the conjugate gradient iterates alone, where every span method is merely admissible. In this case the Pontryagin construction is globally optimal and the value is exact; the general case is addressed in the sections that follow.

Let
\begin{equation}
\label{eq:quadratic}
f(z)=\tfrac12(z-z^\star)^\top Q(z-z^\star),\qquad Q\succ0,
\end{equation}
and write $w_k=z_k-z^\star$, so that $\nabla f(z_k)=Qw_k$. We build up the picture by dimension.

\begin{definition}[Reachable subspace and grade]
\label{def:reach}
For $Q\succ0$ and $w_0\in\R^n$, the order-$K$ Krylov subspace generated by $Q$ and $Qw_0$ is
\begin{equation}
\mathcal{K}_K(Q,Qw_0)=\operatorname{span}\{Qw_0,\,Q^2w_0,\,\dots,\,Q^K w_0\},
\label{eq:reachsub}
\end{equation}
and the reachable subspace of the single-input pair $(Q,Qw_0)$ is its limit $\mathcal{K}_n(Q,Qw_0)$. The grade of $w_0$ with respect to $Q$ is the dimension of $\mathcal{K}_n(Q,Qw_0)$~\cite{Saad2003}, equivalently the degree of the minimal polynomial of $Q$ acting on $w_0$.
\end{definition}

The same subspace arises twice. A Krylov method searches $\mathcal{K}_K(Q,Qw_0)$, the space spanned by repeated applications of $Q$ to the first gradient. A single-input linear system with system matrix $Q$ and input direction $Qw_0$ reaches, in $K$ steps, exactly $\mathcal{K}_K(Q,Qw_0)$. On the quadratic, the first-order iteration~\eqref{eq:fo-class} is both constructions at once, and this identification carries the controllability toolbox to the optimization problem. The pair is controllable when the grade is $n$, and not controllable when $w_0$ lies in a proper $Q$-invariant subspace, where the grade falls below $n$. We call the grade the controllability index of the pair and use it throughout, defined whether or not the pair is controllable.

In the scalar case $n=1$, the single gradient $g_0=Qw_0$ spans the line through $z_0$ in the direction of $z^\star$, so one coefficient reaches $z^\star$ exactly: the exact-reaching value is $V_0(z_0)=1$ for every $z_0\ne z^\star$.
In the planar case $n=2$, if $w_0$ is an eigenvector of $Q$, then $g_0\parallel w_0$ and one step suffices, $V_0(z_0)=1$. Otherwise $g_0$ and $g_1$ are linearly independent, span $\R^2$, and two coefficients place $z_2=z^\star$, so $V_0(z_0)=2$.

In the general case, from~\eqref{eq:fo-class},
\begin{equation}
\label{eq:krylov}
w_K\in w_0+\mathcal{K}_K(Q,Qw_0),
\end{equation}
the shifted Krylov subspace, which contains $w_0$, whereas the Krylov subspace~\eqref{eq:reachsub} is spanned by the images $Q^jw_0$ and need not contain $w_0$ itself. Reaching $z^\star$ (i.e. $w_K=0$) is possible iff $w_0\in\mathcal{K}_K(Q,Qw_0)$, so $V_0(z_0)$ is
\begin{equation}
\label{eq:grade}
V_0(z_0)=\min\{K:\ w_0\in\mathcal{K}_K(Q,Qw_0)\},
\end{equation}
the grade of $w_0$ with respect to $Q$, equivalently the degree of the minimal polynomial of $Q$ acting on $w_0$. Since the only critical point is $z^\star$, here $\Crit(f)=\{z^\star\}$; as $z^\star\in\T_\varepsilon$ we have $V(z_0)\le V_0(z_0)$, with equality for small $\varepsilon$ (Sec.~\ref{sec:formulation}), and throughout the quadratic case we report this exact-reaching value. By the Cayley--Hamilton theorem this is at most $n$, and the conjugate gradient method~\cite{HestenesStiefel1952} attains it: its terminal Galerkin condition is exactly~\eqref{eq:stat} with $p_K=g_K$, and its conjugate directions realize the span~\eqref{eq:krylov} from observed gradients alone.

Equation~\eqref{eq:grade} is the controllability index of the single-input linear system with system matrix $Q$ and input direction $Qw_0$~\cite{Kalman1963}. For a single-input pair $(A,b)$ the controllability matrix, and its instance here, are
\begin{subequations}
\label{eq:ctrbmat}
\begin{align}
\mathcal{C}(A,b)&=[\,b,\ Ab,\ \dots,\ A^{n-1}b\,],\\
\mathcal{C}(Q,Qw_0)&=[\,Qw_0,\ Q^2w_0,\ \dots,\ Q^{n}w_0\,],
\end{align}
\end{subequations}
whose columns span the reachable subspace $\mathcal{K}_n(Q,Qw_0)$; $V_0(z_0)$ is the smallest $k$ at which the leading $k$ columns attain that dimension, equivalently the grade $d_0$. The reachable subspace is thus the controllable subspace, and $V_0(z_0)$ is this controllability index. An eigenvector start gives grade one, a start in a $d$-dimensional invariant subspace gives grade $d$, and a generic start gives grade $n$. The value is basis-independent, hence unchanged by rotating $Q$ to a non-diagonal, i.e. non-separable, form. The first-order span, the Krylov subspace, and the controllable subspace thus coincide, and the minimum-time value is their common dimension count.

On the quadratic $g_j=Qw_j$, so the reachable span $S_K(z_0)$ of~\eqref{eq:reachspan} is $\operatorname{span}\{Qw_0,\dots,Qw_{K-1}\}$.
The optimality conditions ask for a terminal gradient orthogonal to every earlier one. Conjugate gradient builds gradients with exactly this property: each step performs an exact line search along a direction kept $Q$-conjugate to its predecessors, which makes the residuals, the gradients up to sign, mutually orthogonal. Writing $r_k=-g_k=-\nabla f(z_k)$ for the residual and $u_k$ for the directions, the recursion is
\begin{subequations}
\label{eq:cg}
\begin{align}
u_0&=r_0=-g_0,\label{eq:cg-init}\\
\alpha_k&=\inner{r_k}{r_k}/\inner{u_k}{Qu_k},\label{eq:cg-alpha}\\
z_{k+1}&=z_k+\alpha_k u_k,\label{eq:cg-z}\\
r_{k+1}&=r_k-\alpha_k Qu_k,\label{eq:cg-r}\\
\beta_k&=\inner{r_{k+1}}{r_{k+1}}/\inner{r_k}{r_k},\label{eq:cg-beta}\\
u_{k+1}&=r_{k+1}+\beta_k u_k.\label{eq:cg-dir}
\end{align}
\end{subequations}
In the recursion, \eqref{eq:cg-alpha} is the exact line-search coefficient, \eqref{eq:cg-z} and \eqref{eq:cg-r} advance the iterate and the residual, and \eqref{eq:cg-beta} and \eqref{eq:cg-dir} generate the next conjugate direction from the new residual. The mutual orthogonality $\inner{r_i}{r_j}=0$ for $i\neq j$ is the residual-orthogonality (Galerkin) property of Hestenes and Stiefel~\cite{HestenesStiefel1952}; equivalently $z_k$ minimizes $f$ over the affine Krylov space $z_0+\mathcal{K}_k(Q,Qw_0)$. The next two lemmas, and the theorem they yield, show that the minimum-time optimality system of Sec.~\ref{subsec:pmp} is satisfied by exactly these iterates, rather than by gradient descent or any other span method.

\begin{lemma}[Galerkin characterization]
\label{lem:galerkin}
For the strongly convex quadratic~\eqref{eq:quadratic}, with $g_k=Qw_k$ and $d_0$ the grade of $w_0$ in~\eqref{eq:grade}, fix a horizon $K\le d_0$ and a trajectory of~\eqref{eq:fo-class} with $\dim S_K(z_0)=K$. The terminal stationarity in~\eqref{eq:stat}, with the terminal costate $p_K=g_K$ of~\eqref{eq:costate}, reads
\begin{equation}
\inner{g_K}{g_j}=0,\qquad j=0,\dots,K-1,
\label{eq:galerkin}
\end{equation}
the orthogonality of the terminal residual to the reachable span $S_K(z_0)$, and it is necessary and sufficient for $z_K$ to minimize the inner objective~\eqref{eq:inner} over the slice $z_0-S_K(z_0)$.
\end{lemma}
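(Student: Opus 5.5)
The plan is to reduce the inner problem on the slice to an unconstrained strictly convex quadratic in $K$ real coordinates, and then identify its first-order condition with \eqref{eq:galerkin}.

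\textbf{Step 1: the slice is a fixed affine subspace.} On the quadratic, every iterate $z_k$ with $k\le K$ lies in $z_0-S_k(z_0)$. Hence $g_k=Qw_k\in Qw_0+Q S_k(z_0)$. By induction on $k$, $S_K(z_0)\subseteq \mathcal{K}_K(Q,Qw_0)$. Since $\dim S_K(z_0)=K$ and $\dim\mathcal{K}_K(Q,Qw_0)=K$ for $K\le d_0$, the two spaces are equal. So the terminal slice $z_0-S_K(z_0)$ equals $z_0+\mathcal{K}_K(Q,Qw_0)$, which does not depend on the coefficients.

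\textbf{Step 2: parametrize the slice.} Fix a basis $G=[g_0,\dots,g_{K-1}]$ of $S_K(z_0)$ and write $z_K=z_0-Gc$ with $c\in\R^K$. This gives
\[
\phi(c)=f(z_0-Gc)=\tfrac12(w_0-Gc)^\top Q(w_0-Gc).
\]
Its gradient is $\nabla\phi(c)=-G^\top Q w_K=-G^\top g_K$, and its Hessian is $G^\top QG$. The Hessian is positive definite because $Q\succ0$ and $G$ has full column rank. So $\phi$ is strictly convex and coercive. It therefore has a unique minimizer, characterized exactly by $\nabla\phi(c)=0$, that is, by $\inner{g_K}{g_j}=0$ for $j=0,\dots,K-1$. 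This is \eqref{eq:galerkin}, and it gives both necessity and sufficiency.

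\textbf{Step 3: link to the stationarity condition \eqref{eq:stat}.} With $p_K=g_K$ from \eqref{eq:costate}, the terminal-stage conditions $\partial J/\partial c_{K-1,j}=-\inner{p_K}{g_j}=0$ are literally \eqref{eq:galerkin}. This holds because $c_{K-1,j}$ enters only $z_K$, with $\partial z_K/\partial c_{K-1,j}=-g_j$.

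\textbf{Step 4: the earlier coefficients.} The main subtlety is that the inner problem \eqref{eq:inner} optimizes over the whole array, not only the last row. The earlier rows change $g_0,\dots,g_{K-1}$ and thus might seem to change the slice. Step 1 disposes of this: for every array with $\dim S_K(z_0)=K$, the slice is the same $K$-dimensional affine Krylov space. The infimum over the full array therefore equals $\min_c\phi(c)$. A trajectory minimizes the inner objective over the slice exactly when its terminal point satisfies \eqref{eq:galerkin}.

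I expect Step 1 to be the point needing care. There, $g_k\in Qw_0+QS_k$ together with the nested inclusion must be checked, and the dimension hypothesis is what upgrades inclusion to equality.
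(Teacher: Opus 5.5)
Your proof is correct, and Steps 2--3 are exactly the paper's argument. You parametrize $z_K=z_0-G_Kc$ by the terminal coefficients, use $Q\succ0$ and full column rank to get the Hessian $G_K^\top QG_K\succ0$, and read the unique stationary point $G_K^\top g_K=0$ as \eqref{eq:galerkin}. Steps 1 and 4 are not needed for the lemma as stated, which concerns minimization over the given slice. They do, however, correctly show that this slice equals $z_0+\mathcal{K}_K(Q,Qw_0)$ for every array with $\dim S_K(z_0)=K$, and contains the slice of every degenerate array by the inclusion in Step 1. That upgrades the conclusion to optimality for the full inner problem~\eqref{eq:inner}, which the paper only notes informally after Theorem~\ref{thm:selection}.
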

\begin{proof}
On the quadratic the inner objective~\eqref{eq:inner} at horizon $K$ is $\tfrac12 w_K^\top Q w_K$, with
\begin{equation}
\label{eq:wK}
w_K=w_0-\sum_{j<K}c_{K-1,j}\,g_j
\end{equation}
ranging over $w_0+S_K(z_0)$. Collecting the spanning gradients as $G_K=[\,g_0,\dots,g_{K-1}\,]$, the objective is $\tfrac12(w_0-G_Kc)^\top Q(w_0-G_Kc)$ in the terminal coefficients $c=(c_{K-1,j})_j$, with Hessian $G_K^\top Q G_K$. Since $Q\succ0$ and $\dim S_K(z_0)=K$ makes $G_K$ of full column rank, the objective is strictly convex, so its unique minimizer is the stationary point $G_K^\top Q(w_0-G_Kc)=0$, that is $G_K^\top g_K=0$, which is~\eqref{eq:galerkin}.
\end{proof}

\begin{lemma}[Residual orthogonality]
\label{lem:cggalerkin}
For the strongly convex quadratic~\eqref{eq:quadratic}, with $d_0$ the grade of $w_0$ in~\eqref{eq:grade}, the conjugate gradient residuals are mutually orthogonal, $\inner{g_i}{g_j}=0$ for $i\neq j$, so~\eqref{eq:galerkin} holds at every horizon along a single trajectory: one recursion is inner-optimal for all $K\le d_0$ and reaches $z^\star$ at $K=d_0$.
\end{lemma}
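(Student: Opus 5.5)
The plan is the classical Hestenes--Stiefel induction, organized so that Lemma~\ref{lem:galerkin} can be invoked at each horizon. I will show by induction on $k$ that, as long as $g_0,\dots,g_{k-1}\neq0$, the following hold:
\[
\operatorname{span}\{g_0,\dots,g_{k-1}\}=\operatorname{span}\{u_0,\dots,u_{k-1}\}=\mathcal{K}_k(Q,Qw_0),
\]
\[
\inner{g_k}{g_j}=0 \quad (j<k),
\qquad
\inner{u_k}{Qu_j}=0 \quad (j<k).
\]
The base case $k=1$ follows from \eqref{eq:cg-init}: $u_0=-g_0=-Qw_0$, so $\mathcal{K}_1=\operatorname{span}\{g_0\}$. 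The choice of $\alpha_0$ in \eqref{eq:cg-alpha}, combined with \eqref{eq:cg-r}, gives $\inner{r_1}{r_0}=\inner{r_0}{r_0}-\alpha_0\inner{u_0}{Qu_0}=0$, since $u_0=r_0$. The choice of $\beta_0$ in \eqref{eq:cg-beta}, combined with \eqref{eq:cg-dir}, gives $\inner{u_1}{Qu_0}=0$. To see this, write $Qu_0=(r_0-r_1)/\alpha_0$ and use $\inner{r_1}{r_0}=0$.

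For the inductive step I will first establish the span statement. By \eqref{eq:cg-r}, $r_{k+1}=r_k-\alpha_kQu_k$, and $Qu_k\in Q\mathcal{K}_{k+1}\subseteq\mathcal{K}_{k+2}$; by \eqref{eq:cg-dir}, $u_{k+1}$ lies in the same space. Both spans therefore sit inside $\mathcal{K}_{k+2}$. Equality of dimensions then follows from the orthogonality itself: nonzero mutually orthogonal vectors are independent.

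Orthogonality of $r_{k+1}$ against $r_j$ for $j<k$ comes from expanding $\inner{r_k}{r_j}-\alpha_k\inner{u_k}{Qr_j}$. The first term vanishes by induction. For the second, $r_j\in\operatorname{span}\{u_0,\dots,u_j\}$, and $Q$-conjugacy of $u_k$ to those directions kills $\inner{u_k}{Qr_j}$. The case $j=k$ uses \eqref{eq:cg-alpha} together with $\inner{u_k}{Qu_k}=\inner{r_k}{Qu_k}$, which holds because $u_k-r_k=\beta_{k-1}u_{k-1}$ is $Q$-conjugate to $u_k$.

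Conjugacy of $u_{k+1}$ to $u_j$ for $j<k$ follows by writing $Qu_j=(r_j-r_{j+1})/\alpha_j$. The vectors $r_j$ and $r_{j+1}$ are orthogonal to $r_{k+1}$ by the orthogonality just proved, and $u_k$ is $Q$-conjugate to $u_j$ by induction. For $j=k$, conjugacy is exactly the choice of $\beta_k$ in \eqref{eq:cg-beta}. This rests on the identity
\[
\inner{r_{k+1}}{Qu_k}=-\inner{r_{k+1}}{r_{k+1}}/\alpha_k .
\]

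Having $\alpha_k\neq0$ and $\inner{u_k}{Qu_k}>0$ at each step requires $r_k\neq0$, hence $u_k\neq0$ (because $\inner{u_k}{r_k}=\inner{r_k}{r_k}$), and $Q\succ0$. I expect this bookkeeping of the $j=k$ cases, and of the nondegeneracy conditions, to be the main obstacle. The remaining steps are mechanical.

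Finally, mutual orthogonality gives $\dim S_K(z_0)=K$ whenever $g_0,\dots,g_{K-1}\neq0$. Lemma~\ref{lem:galerkin} then applies at every such $K$, so $z_K$ minimizes $f$ over $z_0-S_K(z_0)=z_0+\mathcal{K}_K(Q,Qw_0)$; this is inner optimality for all horizons along one trajectory. Nonzero gradients must stop by $K=d_0$: there can be at most $d_0=\dim\mathcal{K}_n$ mutually orthogonal nonzero vectors in $\mathcal{K}_n$. At $K=d_0$, \eqref{eq:grade} gives $w_0\in\mathcal{K}_{d_0}$, so $0\in w_0+\mathcal{K}_{d_0}$, and the Galerkin minimizer over that affine space is $w_{d_0}=0$, that is, $z_{d_0}=z^\star$. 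Conversely, no earlier $g_K$ vanishes, since $V_0(z_0)=d_0$ by \eqref{eq:grade}.
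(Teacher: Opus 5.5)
Your proposal is correct and follows the paper's route: Hestenes--Stiefel residual orthogonality, then Lemma~\ref{lem:galerkin} at each horizon, then exactness of the projection at $K=d_0$ because $w_0\in\mathcal{K}_{d_0}(Q,Qw_0)$. The one difference is that you prove the orthogonality by the classical induction (your span step is indexed one level ahead, which is harmless), where the paper simply cites~\cite{HestenesStiefel1952}; you also make explicit two points the paper leaves implicit, namely that orthogonality supplies the rank hypothesis $\dim S_K(z_0)=K$ of Lemma~\ref{lem:galerkin} and that no residual vanishes before $d_0$.
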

\begin{proof}
The mutual orthogonality of the residuals is the Hestenes--Stiefel property recalled at~\eqref{eq:cg}~\cite{HestenesStiefel1952}, and it is~\eqref{eq:galerkin} at every horizon simultaneously; with Lemma~\ref{lem:galerkin}, the conjugate gradient iterates are inner-optimal for every $K\le d_0$. At $K=d_0$ the constant term of the minimal polynomial of $Q$ acting on $w_0$ is nonzero because $Q\succ0$, so $w_0\in\mathcal{K}_{d_0}(Q,Qw_0)$ by~\eqref{eq:grade}: the projection is exact, $w_{d_0}=0$ and $g_{d_0}=0$.
\end{proof}

\begin{theorem}[Conjugate gradient emerges from optimality]
\label{thm:selection}
For the strongly convex quadratic~\eqref{eq:quadratic}, with $d_0$ the grade of $w_0$ in~\eqref{eq:grade}, the discrete Pontryagin conditions of Sec.~\ref{subsec:pmp} yield the conjugate gradient iterates at each horizon $K\le d_0$, with value $V_0(z_0)=d_0$ of~\eqref{eq:grade}.
\end{theorem}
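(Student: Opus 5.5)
The plan has two parts. The first is to show that the Pontryagin conditions single out the conjugate gradient trajectory at each horizon; the second is to identify the value $V_0(z_0)$ with the grade $d_0$.

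\textbf{Step 1: reduction to the terminal Galerkin system.} At horizon $K\le d_0$, only the last row $c_{K-1,\cdot}$ of the coefficient array affects $z_K$ once $z_1,\dots,z_{K-1}$ are fixed. The stationarity conditions~\eqref{eq:stat} at stage $K-1$, combined with $p_K=g_K$ from~\eqref{eq:costate}, are therefore exactly~\eqref{eq:galerkin}.

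\textbf{Step 2: the span is intrinsic.} By~\eqref{eq:krylov}, any span-class trajectory satisfies $S_K(z_0)\subseteq\mathcal{K}_K(Q,Qw_0)$. Equality holds exactly when $\dim S_K(z_0)=K$, which is the nondegenerate case assumed in Lemma~\ref{lem:galerkin}. In that case the slice $z_0-S_K(z_0)$ coincides with $z_0+\mathcal{K}_K(Q,Qw_0)$, independently of how the earlier coefficients were chosen.

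\textbf{Step 3: uniqueness of the terminal iterate.} Lemma~\ref{lem:galerkin} gives that~\eqref{eq:galerkin} holds exactly when $z_K$ is the unique minimizer of the strictly convex $f$ over $z_0+\mathcal{K}_K(Q,Qw_0)$. By Lemma~\ref{lem:cggalerkin}, the conjugate gradient iterate $z_K^{\mathrm{CG}}$ satisfies~\eqref{eq:galerkin}. Hence any trajectory satisfying the Pontryagin conditions at horizon $K$ has $z_K=z_K^{\mathrm{CG}}$.

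\textbf{Step 4: nested optimality forces the whole trajectory.} To conclude that the trajectory itself is the conjugate gradient trajectory, I would impose the conditions at every horizon $k\le K$. Applying Step 3 inductively at each $k$ then forces $z_k=z_k^{\mathrm{CG}}$ for all $k$. Lemma~\ref{lem:cggalerkin} shows that one recursion meets all of these conditions simultaneously, so the nested conditions are consistent.

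I expect the main obstacle to be the rank hypothesis $\dim S_K(z_0)=K$ on which Steps 2--4 rest. Two facts need checking. First, conjugate gradient keeps full rank: its residuals are nonzero and mutually orthogonal for $k<d_0$, because $g_k\neq0$ before termination. Second, a rank-deficient competitor cannot do better. Such a trajectory searches a strictly smaller subspace of $\mathcal{K}_K(Q,Qw_0)$, so its terminal value is at least the Krylov minimum. Its Galerkin condition also does not reach $g_K=0$ any earlier, because $g_K=0$ requires $w_0\in S_K(z_0)\subseteq\mathcal{K}_K(Q,Qw_0)$.

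\textbf{Step 5: the value.} The value follows from the same inclusion. Reaching $z^\star$ at horizon $K$ requires $w_0\in\mathcal{K}_K(Q,Qw_0)$, which by~\eqref{eq:grade} fails for $K<d_0$. This gives $V_0(z_0)\ge d_0$. For the reverse inequality, Lemma~\ref{lem:cggalerkin} gives $w_{d_0}=0$ along the conjugate gradient trajectory, so $V_0(z_0)\le d_0$. Together these yield $V_0(z_0)=d_0$.
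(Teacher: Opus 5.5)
Your proposal is correct and follows the paper's route: Lemma~\ref{lem:galerkin} reduces the terminal Pontryagin conditions to the Galerkin system \eqref{eq:galerkin}. Lemma~\ref{lem:cggalerkin} shows the conjugate gradient iterates satisfy it at every horizon and reach $z^\star$ at $d_0$, and the Krylov inclusion behind \eqref{eq:grade} gives $V_0(z_0)\ge d_0$. Your Steps 2--4 make explicit what the paper's short proof leaves inside its lemmas: the inclusion $S_K(z_0)\subseteq\mathcal{K}_K(Q,Qw_0)$, uniqueness by strict convexity, and the horizon-by-horizon induction that carries the rank hypothesis along the nonzero orthogonal residuals.
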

\begin{proof}
By Lemma~\ref{lem:galerkin} the optimality conditions at horizon $K$ are the Galerkin condition~\eqref{eq:galerkin}, and by Lemma~\ref{lem:cggalerkin} the conjugate gradient iterates satisfy it at every horizon $K\le d_0$, reaching $z^\star$ at $K=d_0$; that no schedule reaches earlier, and hence $V_0(z_0)=d_0$, is~\eqref{eq:grade}.
\end{proof}

The costate recursion~\eqref{eq:costate} thus does real work on the
quadratic: its terminal condition $p_K=g_K$ turns the stationarity
\eqref{eq:stat} into the Galerkin condition~\eqref{eq:galerkin}, and the necessary conditions hold at every horizon along exactly one iterate sequence, the conjugate gradient iterates. Condition~\eqref{eq:galerkin} identifies $z_K$ as the energy-norm projection of $z^\star$ onto the affine Krylov space $z_0+\mathcal{K}_K(Q,Qw_0)$, the $K$th conjugate gradient iterate, and the value $V_0(z_0)=d_0$ follows from the optimality system rather than by separate construction. 
The Pontryagin and dynamic-programming routes therefore agree on the quadratic, the former producing the optimal open-loop schedule that the latter attains pointwise. 
The schedule need not be unique: different trajectories with $\dim S_K(z_0)=K$ generate the same Krylov space and the same inner optimum, so the optimality system pins down the iterates rather than a unique coefficient array.

\begin{remark}[Count versus rate]
\label{rem:minimax}
Fix a class $\mathcal F$ of objectives and a start $z_0$, write $V(f,z_0)$ for the value of Definition~\ref{def:value} with the objective explicit, and let $W_{\mathcal F}(\varepsilon)$ be the worst-case oracle complexity over $\mathcal F$, to tolerance $\varepsilon$, of a method optimal in the worst case, the quantity performance estimation computes~\cite{DroriTeboulle2014,TaylorHendrickxGlineur2017,KimFessler2016}. Such a method reaches $\T_\varepsilon$ on every $f\in\mathcal F$ within $W_{\mathcal F}(\varepsilon)$ queries, so
\begin{equation}
\label{eq:minimax}
\sup_{f\in\mathcal F} V(f,z_0)\ \le\ W_{\mathcal F}(\varepsilon).
\end{equation}
The inequality is elementary; on quadratics its looseness is quantitative. The exact-reaching value~\eqref{eq:grade} is the grade, $d_0\le\nu\le n$ with $\nu$ the number of distinct eigenvalues of $Q$, independent of the condition number $\kappa$; the query count of a method governed by a fixed contraction factor tuned to $[\lambda_{\min},\lambda_{\max}]$, such as gradient descent, the heavy-ball method, or a worst-case rate-optimal accelerated method, is $O(\sqrt{\kappa}\log(1/\varepsilon))$ by the standard Chebyshev argument~\cite{GolubVanLoan2013,Saad2003}, tight in the worst case over quadratics of condition number $\kappa$~\cite{NemirovskiYudin1983,Nesterov2018}. With $\nu$ fixed and $\kappa\to\infty$ the two diverge, and the left side of~\eqref{eq:minimax} over the clustered class is of order $\nu$ while the right is of order $\sqrt{\kappa}\log(1/\varepsilon)$. On a $\nu=2$ instance with $\kappa=100$ the value is $2$, while the triple-momentum method~\cite{VanScoyFreemanLynch2018} and gradient descent take about $88$ and several hundred queries, the former the rate-bound count $\lceil\log(1/\varepsilon)/(2\log(1/\rho))\rceil$ at $\rho=1-1/\sqrt{\kappa}$; since the value depends only on the number of distinct eigenvalues and the rate only on $\kappa$, the count is shared by the $2\times2$ conditioned quadratic of~\cite{GramlichEbenbauerScherer2022} and the $n=50$, $\nu=2$ family of Fig.~\ref{fig:gap}. The figure's stopping rule is on the function gap, which agrees with the gradient-norm target as $\varepsilon\downarrow0$, both targets reaching the exact value $d_0$. The gap is a property of the spectrum, not the dimension: on the Nemirovski--Yudin tridiagonal instance~\cite{NemirovskiYudin1983}, the canonical case on which the accelerated bound is tight, the condition number is tied to the dimension, $\kappa\approx4n^2/\pi^2$, and the value $n$ sits only a fixed $\log(1/\varepsilon)$ factor below the bound. This does not contradict the worst-case bound, which governs the worst instance of the class and is saturated by quadratics; it quantifies how loose the bound is on a fixed instance. The rate and the count read the same first-order span~\eqref{eq:fo-class}: the rate is its yield over the hardest instance of a condition-number class, the value its yield on the instance at hand.
\end{remark}

\begin{figure}[t]
\centering
\includegraphics[width=0.99\columnwidth]{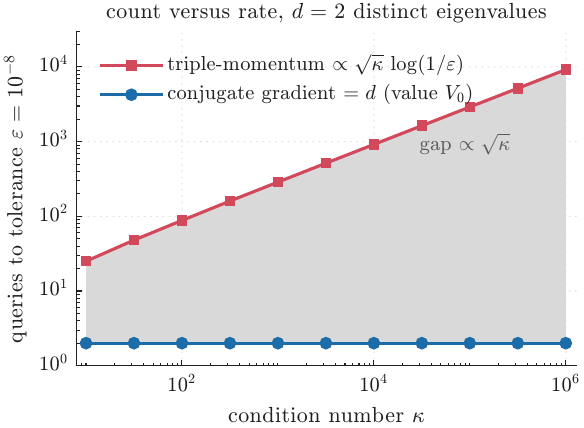}
\caption{Count versus rate on a strongly convex quadratic with $\nu=2$ distinct eigenvalues ($1$ and $\kappa$, $n=50$, generic start, $\varepsilon=10^{-8}$ on the relative function gap): conjugate gradient terminates at the grade $d_0=2$, independent of $\kappa$, while the triple-momentum method~\cite{VanScoyFreemanLynch2018} takes $O(\sqrt{\kappa}\log(1/\varepsilon))$ queries. With $\nu$ fixed the gap diverges as $\sqrt{\kappa}$.}
\label{fig:gap}
\end{figure}

\section{Reachability and Controllability Beyond the Quadratic}
\label{sec:reach}
\label{subsec:controllability}

The quadratic case organized the value function through a fixed linear system, for which the reachable span is a Krylov subspace and the value is a controllability index. We now isolate the structure that survives when $f$ is not quadratic, where no fixed system matrix and no conjugate gradient guarantee are available, in two steps: a reachability test for feasibility and a curvature shortcut that can lower the value.

\subsection{The reachability test}
\label{subsec:reachtest}

This subsection certifies which instances admit a finite reaching schedule, and in how few steps; that certificate is what the construction of Sec.~\ref{sec:solving} presupposes. Reachability is the feasibility condition of the reaching problem, the analogue for this finite-horizon problem of controllability for the linear-quadratic regulator; unlike controllability it is target-specific, so it can hold for one target while failing for another. Beyond the quadratic the inner minimization is generally nonconvex, so the Pontryagin stationarity is necessary but not sufficient and need not single out the inner optimum; a nonlinear least-squares solve of the optimality system then returns an extremal and a certified upper bound on the value, not a guaranteed optimum. We therefore characterize reachability directly, by whether the reachable affine slice meets the critical set, a condition that is exact and requires no inner minimization. Every iterate of~\eqref{eq:fo-class} lies in $z_0-S_K(z_0)$, the affine slice of the reachable span~\eqref{eq:reachspan}, so reaching a critical point is a question of whether the slice meets the critical set, and reaching $z^\star$ of whether $z^\star-z_0$ lies in the span.

Define the reachable gradient span from $z_0$,
\begin{equation}
\label{eq:span}
\Grad(z_0)=\operatorname{span}\big\{\nabla f(z):\ z\ \text{reachable from}\ z_0\ \text{under}\ \eqref{eq:fo-class}\big\}.
\end{equation}
Every iterate lies in $z_0-\Grad(z_0)$, and the dimension of the $K$-step reachable set is at most $K$.

\begin{definition}[Stationarity-reachable instance]
\label{def:reachable}
The instance $(f,z_0)$ is stationarity-reachable if $\nabla f(z_K)=0$ for some finite horizon $K$ and some schedule of~\eqref{eq:fo-class}.
\end{definition}

Written out, the exact-reaching value is
\begin{equation}
V_0(z_0)=\min\bigl\{K:\ \exists\,\{c_{k,j}\}\ \text{with}\ \nabla f(z_K)=0\bigr\},
\label{eq:V0def}
\end{equation}
finite exactly on the stationarity-reachable instances, and $V(z_0)\le V_0(z_0)$ since every critical point lies in $\T_\varepsilon$. The engine of the test is a growth dichotomy for the reachable span.

\begin{lemma}[Span growth]
\label{lem:spangrowth}
Let $f\in C^2$, let $A_K=z_0-S_K(z_0)$, and let $P_K$ be the orthogonal projection onto $S_K(z_0)^\perp$. A continuation with $\dim S_{K+1}(z_0)=\dim S_K(z_0)+1$ exists unless
\begin{equation}
P_K\,\nabla^2 f(z)\big|_{S_K(z_0)}=0\quad\text{for all }z\in A_K,
\label{eq:invariance}
\end{equation}
that is, unless $S_K(z_0)$ is $\nabla^2 f$-invariant along $A_K$; and in the invariant case every continuation keeps $z_k\in A_K$ and $g_k\in S_K(z_0)$, so the span never grows again.
\end{lemma}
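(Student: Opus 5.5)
The plan is to work with the reachable slice directly. Write $S=S_K(z_0)$ and $A=A_K$. In~\eqref{eq:fo-class} the next iterate $z_K$ may be any point of $A$, because the coefficients $c_{K-1,j}$ range freely over $\R^K$ and the gradients $g_0,\dots,g_{K-1}$ span $S$. The new gradient $g_K=\nabla f(z_K)$ then ranges over $\nabla f(A)$. Hence a continuation with $\dim S_{K+1}=\dim S+1$ exists if and only if $P_K\nabla f(z)\neq0$ for some $z\in A$. The lemma therefore reduces to one claim: $P_K\nabla f$ vanishes identically on $A$ if and only if~\eqref{eq:invariance} holds, given that $P_K\nabla f(z_0)=P_Kg_0=0$.

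For the direction ``invariance implies no growth'', fix $z\in A$ and join it to $z_0$ by the segment $z_0+t(z-z_0)$, $t\in[0,1]$. This segment lies in $A$, and its direction $z-z_0$ lies in $S$. By the fundamental theorem of calculus, valid since $f\in C^2$,
\[
P_K\nabla f(z)=P_Kg_0+\int_0^1 P_K\nabla^2 f\bigl(z_0+t(z-z_0)\bigr)(z-z_0)\,dt .
\]
The first term is zero because $g_0\in S$. The integrand is zero by~\eqref{eq:invariance}. So $\nabla f(A)\subset S$, and $S_{K+1}=S$.

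For the converse, ``no growth implies invariance'', suppose $P_K\nabla f\equiv0$ on the affine set $A$. Differentiate this identity along any direction $s\in S$ at any $z\in A$. This gives $P_K\nabla^2 f(z)s=0$, which is~\eqref{eq:invariance}. The contrapositive is the existence statement: if~\eqref{eq:invariance} fails at some $z\in A$ and $s\in S$, then $P_K\nabla f$ is not constant on $A$. Since it vanishes at $z_0$, it is nonzero somewhere on $A$, and choosing the terminal coefficients to place $z_K$ there grows the span by exactly one.

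Finally, for the persistence claim I will argue by induction on $k\ge K$. The hypothesis is that $z_k\in A$ and that all gradients observed so far lie in $S$, so $S_k=S$. Then $z_{k+1}\in z_0-S_{k+1}=A$. Its gradient is $\nabla f(z_{k+1})\in\nabla f(A)\subset S$ by the integral identity. Hence $S_{k+2}=S$, which closes the induction.

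The main obstacle is the converse. Differentiating within $A$ needs $A$ to be a genuine affine subspace on which all points are realizable, and I have to check that the realizability argument is not circular. It is not, because $z_K$ is chosen freely in $A$ using only the already fixed gradients $g_0,\dots,g_{K-1}$. The only subtlety is that earlier coefficients determine $S$, so the statement is understood for the given trajectory prefix.
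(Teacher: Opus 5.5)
Your proposal is correct and follows essentially the same route as the paper: the free terminal coefficients place $z_K$ anywhere on $A_K$, so growth is equivalent to $P_K\nabla f\not\equiv0$ on $A_K$. This is then shown equivalent to~\eqref{eq:invariance} by differentiating along directions in $S_K(z_0)$ in one direction and integrating from $z_0$, where $P_Kg_0=0$, in the other. Your explicit induction for persistence just spells out the paper's final sentence; mind the indexing there, since it is $z_k\in A$ together with invariance that gives $g_k\in S$ and hence $S_{k+1}=S$.
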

\begin{proof}
At stage $K$ the terminal coefficients of~\eqref{eq:fo-class} are free, so the next query point $z_K$ may be placed at any $z\in A_K$; the span grows iff $g_K=\nabla f(z)\notin S_K(z_0)$ for some such $z$, that is, iff $P_K\nabla f\not\equiv0$ on $A_K$. We claim $P_K\nabla f\equiv0$ on $A_K$ if and only if~\eqref{eq:invariance} holds. For $z\in A_K$ and $v\in S_K(z_0)$, the line $z+tv$ stays in $A_K$ and
\begin{equation}
\tfrac{d}{dt}\,P_K\nabla f(z+tv)\big|_{t=0}=P_K\,\nabla^2 f(z)\,v.
\label{eq:dirderiv}
\end{equation}
If $P_K\nabla f\equiv0$ on $A_K$, all such derivatives vanish, which is~\eqref{eq:invariance}. Conversely, if~\eqref{eq:invariance} holds, then by~\eqref{eq:dirderiv} the map $P_K\nabla f$ is constant along every segment of the affine slice $A_K$, hence constant on $A_K$, and its value at $z_0\in A_K$ is $P_Kg_0=0$ since $g_0\in S_K(z_0)$; so $P_K\nabla f\equiv0$. In the invariant case every point of $A_K$ has gradient in $S_K(z_0)$, so every continuation keeps $z_k\in A_K$ and $g_k\in S_K(z_0)$, and the span never grows again.
\end{proof}

\begin{theorem}[Reachability test]
\label{thm:controllability}
Let $f\in C^2$ with global minimizer $z^\star$, and let $z_0\notin\T_\varepsilon$ with $\nabla f(z_0)\neq0$.
\begin{enumerate}
\item[(a)] Along any trajectory of~\eqref{eq:fo-class}, a critical point is achievable by a choice of the terminal coefficients if and only if the affine slice $z_0-S_K(z_0)$ meets the critical set, so
\begin{equation}
\begin{aligned}
V_0(z_0)=\min\bigl\{K:\ &(z_0-S_K(z_0))\cap\Crit(f)\neq\emptyset\\
&\text{for some trajectory}\bigr\},
\end{aligned}
\label{eq:Vspan}
\end{equation}
with $V_0(z_0)=\infty$ if no such $K$ exists.
\item[(b)] If no slice reachable from $z_0$ is $\nabla^2 f$-invariant while disjoint from the critical set, then $V_0(z_0)\le n$. Conversely, $V_0(z_0)=\infty$ if and only if every trajectory from $z_0$ freezes at an invariant slice that contains no critical point.
\end{enumerate}
\end{theorem}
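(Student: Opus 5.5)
For part (a), I will argue directly from the affine-slice structure~\eqref{eq:reachspan}. Fix a trajectory up to stage $K-1$, so that $g_0,\dots,g_{K-1}$ are determined. The terminal row $\{c_{K-1,j}\}_{j<K}$ enters only $z_K$, and it is unconstrained in $\R^K$. Hence $z_K$ ranges over exactly the slice $z_0-S_K(z_0)$.

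Necessity of the slice condition is immediate, since every admissible $z_K$ lies in the slice. For sufficiency, take $z^c\in(z_0-S_K(z_0))\cap\Crit(f)$ and write $z_0-z^c=\sum_j c_j g_j$. Setting $c_{K-1,j}=c_j$ places $z_K=z^c$, and the earlier rows are left unchanged. Minimizing over $K$ and over the earlier rows, that is over trajectories, then gives~\eqref{eq:Vspan}, with the convention $\min\emptyset=\infty$ matching Definition~\ref{def:value}.

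For part (b), I will use Lemma~\ref{lem:spangrowth} as a greedy construction. Start with $S_1=\operatorname{span}\{g_0\}$, which has dimension one because $g_0\neq0$. At each stage $K$ I first test whether the current slice $A_K$ meets $\Crit(f)$; if it does, part (a) gives $V_0\le K$.

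If $A_K$ misses $\Crit(f)$, the hypothesis forbids $A_K$ from being $\nabla^2 f$-invariant. Lemma~\ref{lem:spangrowth} then supplies a query point $z_K\in A_K$ with $\dim S_{K+1}=K+1$. This point is realizable by choosing the terminal coefficients, and the earlier rows are unchanged, so the trajectory extends.

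Dimension can grow at most to $n$. At $K=n$ we have $S_n=\R^n$ and $A_n=\R^n$, which contains $z^\star\in\Crit(f)$. Hence the process stops at some $K\le n$, and $V_0(z_0)\le n$. The one point to check is that ``reachable slice'' in the hypothesis covers every slice produced along the way; these are exactly the slices generated by admissible schedules, so it does.

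For the converse in (b), consider the two directions separately.

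\textbf{Frozen implies infinite.} Suppose every trajectory freezes at an invariant slice containing no critical point. By the invariant case of Lemma~\ref{lem:spangrowth}, all later iterates stay in that slice. No earlier slice on the trajectory can meet $\Crit(f)$ either, because earlier slices are contained in the frozen one (the spans are nested). Part (a) then gives $V_0=\infty$.

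\textbf{Infinite implies frozen.} Suppose $V_0=\infty$, so by (a) no slice along any trajectory meets $\Crit(f)$. Every trajectory has $\dim S_K\le n$, so its span stabilizes. If a trajectory could always be continued with growth, it would reach dimension $n$, and the slice $\R^n$ contains $z^\star$, a contradiction. So every maximal growth sequence ends at a slice where growth is impossible, and by Lemma~\ref{lem:spangrowth} that slice is invariant. It contains no critical point, which is the claimed freezing.

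The main obstacle is the quantifier ``every trajectory freezes''. A trajectory may also stall by choice, by picking $z_K$ inside the span without the slice being invariant. I will handle this by reading the claim as ``every trajectory's span, once continued maximally, terminates at an invariant critical-free slice''. I will also note that stalled trajectories are dominated: any slice they visit is also visited by a growth-continuation, so they do not affect the minimum in~\eqref{eq:Vspan}.
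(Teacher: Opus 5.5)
Your proposal is correct and follows the paper's proof. Part (a) rests on the freedom of the terminal row, and the first claim of (b) is the same greedy growth via Lemma~\ref{lem:spangrowth}. You stop as soon as the slice meets $\Crit(f)$, where the paper stops at invariance or full dimension, which is an equivalent use of the hypothesis. Your converse is more complete than the paper's, which only argues that freezing forces $V_0(z_0)=\infty$. Your dimension-cap argument supplies the other direction. Your reading of ``every trajectory freezes'' as ``every maximal growth continuation freezes'' is genuinely needed, because a trajectory can stall voluntarily at a non-invariant slice.
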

\begin{proof}
(a) Fix a trajectory $z_0,\dots,z_{K-1}$ with observed gradients $g_0,\dots,g_{K-1}$ spanning $S_K(z_0)$. The terminal iterate is 
\begin{equation}
    z_K=z_0-\sum_{j<K}c_{K-1,j}\,g_j
\end{equation}
with the coefficients free in $\R^K$, so the set of achievable $z_K$ is exactly the affine slice $A_K=z_0-S_K(z_0)$, and a critical point is achievable if and only if some $z_c\in\Crit(f)$ lies in $A_K$, that is $z_c-z_0\in S_K(z_0)$. Taking the minimum over $K$ and over trajectories gives~\eqref{eq:Vspan}: the right side is achievable by the trajectory realizing it, and any schedule landing on a critical point $z_c$ exhibits 
\begin{equation}
    z_c-z_0=\sum_j c_{K-1,j}g_j\in S_K(z_0)
\end{equation} 
along its own trajectory.

(b) For the first claim, grow the span greedily: starting from $S_1=\operatorname{span}\{g_0\}$, as long as the current slice is not invariant, Lemma~\ref{lem:spangrowth} supplies a continuation raising the dimension by one. The process halts either when the span reaches $\R^n$, after at most $n$ steps, in which case the slice is all of $\R^n$ and contains every critical point, so part~(a) places $z_K$ on one with $K\le n$; or when it freezes at an invariant slice, which by hypothesis meets the critical set, and part~(a) places the iterate on a critical point within it, again with $K\le\dim S_K\le n$. For the converse, if some trajectory's slice meets the critical set then $V_0<\infty$ by part~(a); if every trajectory freezes at an invariant slice containing no critical point, then by the last clause of Lemma~\ref{lem:spangrowth} no continuation of any trajectory ever meets the critical set, and $V_0=\infty$.
\end{proof}

For real-analytic objectives the reachable dimension is generic. Proposition~\ref{prop:generic} makes this precise; its proof is deferred to Appendix~\ref{app:thm1}.

\begin{proposition}[Generic reachability]
\label{prop:generic}
If no proper nonzero subspace is $\nabla^2 f$-invariant along a reachable slice, then the span never freezes and $V_0(z_0)\le n$ for every $z_0$ with $\nabla f(z_0)\neq0$. For real-analytic $f$, $\dim\Grad(z_0)$ equals a fixed value $n_\star$ for every $z_0$ outside a set of Lebesgue measure zero, and $n_\star=n$ unless some proper nonzero subspace is $\nabla^2 f$-invariant along a reachable slice.
\end{proposition}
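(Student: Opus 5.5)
The plan is to prove the two claims of Proposition~\ref{prop:generic} separately. The first follows from Lemma~\ref{lem:spangrowth} and Theorem~\ref{thm:controllability}(b). The second comes from a rank argument for real-analytic maps.

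For the first claim, run the greedy growth from the proof of Theorem~\ref{thm:controllability}(b). Suppose $S_K(z_0)$ is a proper nonzero subspace, which holds for $1\le K<n$ since $g_0\neq0$. By hypothesis it is not $\nabla^2 f$-invariant along the reachable slice $A_K$, so \eqref{eq:invariance} fails. Lemma~\ref{lem:spangrowth} then supplies a continuation that raises $\dim S_K$ by one. After at most $n-1$ such steps we reach $S_n(z_0)=\R^n$. The slice $A_n$ is then all of $\R^n$ and contains $z^\star\in\Crit(f)$. By Theorem~\ref{thm:controllability}(a) the terminal coefficients place $z_n$ at a critical point, which gives $V_0(z_0)\le n$. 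The span never freezes, because freezing is exactly the invariant case of Lemma~\ref{lem:spangrowth}.

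For the second claim, encode reachable points by a real-analytic map. Fix $K=n$ and write the iterates recursively as $z_{k+1}=z_0-\sum_{j\le k}c_{k,j}\nabla f(z_j)$. Each $z_k$ is then a real-analytic function of $(z_0,c)$, with $c\in\R^{n(n+1)/2}$, and so is each $g_k$. Set $F(z_0,c)=[\,g_0,\dots,g_{n-1}\,]$. Because the greedy construction adds at most one dimension per step, and the span is nondecreasing once it is invariant, we have
\begin{equation*}
\dim\Grad(z_0)=\max_c\operatorname{rank}F(z_0,c).
\end{equation*}
Let $n_\star$ be the maximum of $\operatorname{rank}F$ over all of $\R^n\times\R^{n(n+1)/2}$. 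The set where the rank is below $n_\star$ is the common zero set of all $n_\star\times n_\star$ minors of $F$. These minors are real-analytic and not all identically zero, so this set is a proper analytic subset and has measure zero.

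The main obstacle is passing from a null set in the joint $(z_0,c)$ space to a null set in $z_0$ alone. I would first apply Fubini to the complement of the bad set. For almost every $z_0$, the slice $\{c:\operatorname{rank}F(z_0,c)<n_\star\}$ is null. In particular some $c$ attains rank $n_\star$, so $\dim\Grad(z_0)\ge n_\star$. The reverse inequality $\dim\Grad(z_0)\le n_\star$ holds everywhere by the definition of $n_\star$. Together these give $\dim\Grad(z_0)=n_\star$ outside a Lebesgue-null set.

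Finally, identify $n_\star$. Suppose $n_\star<n$, and take a generic $z_0$ with a maximizing trajectory. Its span $S=S_{n_\star}(z_0)$ cannot grow, so by Lemma~\ref{lem:spangrowth} it is $\nabla^2 f$-invariant along the reachable slice $z_0-S$. Moreover $S$ is proper and nonzero: it is proper because $n_\star<n$, and nonzero because $\nabla f\not\equiv0$ on the generic set, since otherwise $f$ would be constant. This contrapositive gives $n_\star=n$ unless some proper nonzero subspace is invariant along a reachable slice, which completes the proposition.
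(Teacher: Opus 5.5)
Your proposal is correct. Your first claim and your final identification of $n_\star$ match the paper: greedy growth through Lemma~\ref{lem:spangrowth} and Thm.~\ref{thm:controllability}, then freezing of a maximal span.

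\textbf{Where you differ: the genericity step.}
\begin{itemize}
\item The paper builds the Hessian-generated chain $R_{k+1}=R_k+\operatorname{span}\{\nabla^2 f(z)u\}$. It then uses the identity theorem and polarization to replace the slice data by derivatives of $f$ at the single point $z_0$. This gives matrices $M_N(z_0)$ analytic in $z_0$ alone, and one analytic function $\Theta(z_0)$ whose zero set is null.
\item You parametrize whole trajectories instead. The map $(z_0,c)\mapsto[g_0,\dots,g_{n-1}]$ is analytic as a composition, you maximize its rank jointly, and you pass to $z_0$ by Fubini.
\item Your route needs no jet reduction or polarization, and it mirrors the trajectory map $\Xi$ used in the proof of Thm.~\ref{thm:curvature}(b).
\item The paper's route gives an intrinsic description of $\Grad(z_0)$ through finitely many derivatives at one point. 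That is closer to its accessibility-rank-condition reading.
\end{itemize}

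\textbf{A step that is not yet justified.} Both of your inequalities rest on $\dim\Grad(z_0)=\max_c\operatorname{rank}F(z_0,c)$, and your stated reason does not establish the $\le$ direction. The last clause of Lemma~\ref{lem:spangrowth} covers only continuations of the frozen trajectory, whereas $\Grad(z_0)$ collects gradients from every branch. The missing argument is a short induction:
\begin{itemize}
\item Let the greedy trajectory freeze at a span $S$, so that $\nabla f(z)\in S$ for every $z\in z_0-S$.
\item Take any other trajectory. Its start $z_0$ lies in $z_0-S$.
\item Whenever $g_0,\dots,g_k\in S$, the next iterate lies in $z_0-S$, hence $g_{k+1}\in S$.
\end{itemize}
So $\Grad(z_0)=S$, and it is realized by a single trajectory of length at most $n$. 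The paper relies on the same fact when it asserts that its chain stabilizes at $\Grad(z_0)$. Use this greedy span in your last step rather than $S_{n_\star}(z_0)$ of an arbitrary maximizer, since the first $n_\star$ columns of a maximizer need not be independent.

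\textbf{A simplification.} Fubini is more than you need. Fix a maximizing pair $(z_0^\circ,c^\circ)$. The sum of squares of the $n_\star\times n_\star$ minors of $F(z_0,c^\circ)$ is analytic in $z_0$ and nonzero at $z_0^\circ$. Off its null zero set, $\operatorname{rank}F(z_0,c^\circ)=n_\star$. This is exactly the paper's $\Theta$, with $c^\circ$ in place of $N_0$.
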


The distinction between $V$ and $V_0$ is the tolerance: $V$ reaches the gradient-tolerance set $\T_\varepsilon$ and $V_0$ a critical point exactly, so $V\le V_0$ always, with the gap closing as $\varepsilon\downarrow0$. The characterization above is exact for $V_0$ and an upper bound for $V$. The quadratic statements of Sec.~\ref{subsec:convex} are exact-termination statements, stated for $V_0$, where the critical set is the single point $z^\star$, and they coincide with $V$ for every $\varepsilon$ below the smallest nonzero gradient gap along the trajectory. The tolerance $\varepsilon>0$ is needed only to keep the target transverse in the optimal-control formulation; the characterizations here are algebraic and apply to exact reaching.

Theorem~\ref{thm:controllability} is the nonlinear counterpart of the rank test: the constant controllability matrix~\eqref{eq:ctrbmat} of the quadratic case is replaced by the Hessian-generated span along the reachable slice, and the invariance condition~\eqref{eq:invariance} plays the role of the accessibility rank condition for the lifted state $(z_k,S_k)$ formed by the iterate and the span of observed gradients, the first-order analogue of the bracket-generating condition of nonlinear control~\cite{JakubczykSontag1990,AlbertiniSontag1993}. The analogy is structural rather than literal: those results characterize accessibility of an invertible state-space system by a Lie-algebra rank condition, whereas~\eqref{eq:invariance} is a single Hessian-image condition on a plant with memory. Its two forms mirror the two classical ones: the span-growth form, Thm.~\ref{thm:controllability} built on Lemma~\ref{lem:spangrowth}, is the rank condition that the reachable span capture $z^\star-z_0$, and the invariance form~\eqref{eq:invariance} is the invariant-subspace condition that no proper $\nabla^2 f$-invariant slice exclude it, the discrete analogue of the Popov--Belevitch--Hautus characterization~\cite{Sontag1998}; on the quadratic~\eqref{eq:quadratic}, where the invariance condition~\eqref{eq:invariance} is $Q$-invariance, both reduce to controllability of $(Q,Qw_0)$. Unlike the rank condition, however, the test is not a single finite rank evaluated in advance: off the quadratic no Cayley--Hamilton closure caps the span, so the condition is a property of the Hessian field $\{\nabla^2 f(z)\}$ along the slice. For real-analytic $f$ the obstruction either holds identically, giving a genuine invariant subspace and a frozen span, or holds only on a measure-zero set, off which the span reaches $\R^n$ within $n$ steps as in Proposition~\ref{prop:generic}. The clean sufficient condition for uniform reachability is that the Hessian field admit no proper common invariant subspace, the nonlinear analogue of a controllable pair.

A complementary sufficient condition makes invariant slices harmless rather than absent.

\begin{lemma}[Frozen slices and coercivity]
\label{lem:freeze}
Let $f\in C^2$. If a reachable slice $A_K=z_0-S_K(z_0)$ is $\nabla^2 f$-invariant in the sense of~\eqref{eq:invariance} and $f$ attains its infimum over $A_K$, then $A_K$ contains a critical point of $f$. In particular, if $\nabla f(z_0)\neq0$ and every slice reachable from $z_0$ lies in a subspace $\mathcal{W}$ on which $f$ is coercive (radially unbounded), then no reachable slice is $\nabla^2 f$-invariant while disjoint from the critical set, and $V_0(z_0)\le\dim\mathcal{W}$.
\end{lemma}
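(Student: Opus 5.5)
The first claim follows from the invariance identity already established in the proof of Lemma~\ref{lem:spangrowth}, combined with first-order optimality of the minimizer over the slice. Suppose $A_K=z_0-S_K(z_0)$ satisfies~\eqref{eq:invariance}. By the converse direction of Lemma~\ref{lem:spangrowth}, $P_K\nabla f\equiv0$ on $A_K$: every gradient on the slice lies in $S_K(z_0)$. Now let $\bar z\in A_K$ attain $\inf_{A_K} f$. The slice is an affine subspace with direction space $S_K(z_0)$, so the minimizer condition gives
\[
\inner{\nabla f(\bar z)}{v}=0\quad\text{for all } v\in S_K(z_0),
\]
that is, the tangential component of $\nabla f(\bar z)$ vanishes. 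The normal component also vanishes, by $P_K\nabla f(\bar z)=0$. Hence $\nabla f(\bar z)=0$, and $\bar z\in A_K\cap\Crit(f)$.

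For the second claim, I fix a reachable slice $A_K\subseteq\mathcal{W}$ that is $\nabla^2 f$-invariant. Since $z_0\in A_K\subseteq\mathcal{W}$, I read ``$f$ coercive on $\mathcal{W}$'' as coercivity of $f$ restricted to the affine set containing the slices. Then $f$ restricted to the closed set $A_K$ is continuous and coercive. Its sublevel set $\{z\in A_K: f(z)\le f(z_0)\}$ is therefore nonempty, closed and bounded, hence compact, so the infimum over $A_K$ is attained. The first claim then yields a critical point in $A_K$. So no reachable slice is invariant while disjoint from $\Crit(f)$, which is exactly the hypothesis of Theorem~\ref{thm:controllability}(b).

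For the bound $V_0(z_0)\le\dim\mathcal{W}$, I rerun the greedy growth argument from the proof of Theorem~\ref{thm:controllability}(b), but confined to $\mathcal{W}$. Starting from $S_1=\operatorname{span}\{g_0\}$, $g_0\neq0$, Lemma~\ref{lem:spangrowth} raises the dimension by one at each non-invariant stage. By hypothesis every reachable slice, and hence every reachable span, lies in the direction space of $\mathcal{W}$, so the dimension never exceeds $\dim\mathcal{W}$. The process therefore stops within $\dim\mathcal{W}$ steps, in one of two ways:
\begin{itemize}
\item at an invariant slice, which contains a critical point by the first paragraph; or
\item at a span filling $\mathcal{W}$'s direction space, whose slice is then all of $\mathcal{W}$ and must be invariant, since the span cannot grow further inside $\mathcal{W}$. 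The same conclusion applies.
\end{itemize}
In either case Theorem~\ref{thm:controllability}(a) places $z_K$ on a critical point with $K\le\dim S_K\le\dim\mathcal{W}$.

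The main obstacle is the bookkeeping around $\mathcal{W}$. One has to make sure the ``subspace'' hypothesis is read so that it contains $z_0$ and the slices, i.e., as an affine set through $z_0$ or with $z_0\in\mathcal{W}$. One also has to justify that growth confined to $\mathcal{W}$ ends in an invariant slice rather than escaping $\mathcal{W}$. The latter is guaranteed because the hypothesis constrains every reachable slice.
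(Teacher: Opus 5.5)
Your proposal is correct and follows the paper's proof essentially step for step. Invariance places every slice gradient in $S_K(z_0)$, first-order optimality at the attained minimizer removes the tangential component, coercivity on the closed slice gives attainment, and the greedy growth of Theorem~\ref{thm:controllability}(b) confined to $\mathcal{W}$ halts within $\dim\mathcal{W}$ steps at an invariant slice (your second bullet, via Lemma~\ref{lem:spangrowth}, is the paper's remark that $z_0-\mathcal{W}$ is itself invariant).
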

\begin{proof}
The invariance~\eqref{eq:invariance} gives $\nabla f(z)\in S_K(z_0)$ for every $z\in A_K$, as in the proof of Lemma~\ref{lem:spangrowth}. If $z_m$ attains the infimum of $f$ over the affine set $A_K$, whose direction space is $S_K(z_0)$, then $\nabla f(z_m)\perp S_K(z_0)$; hence $\nabla f(z_m)=0$ and $z_m\in\Crit(f)\cap A_K$. For the second claim, every reachable slice is a closed affine subset of $\mathcal{W}$, so its infimum is attained by coercivity and every invariant slice meets the critical set. The greedy growth in the proof of Thm.~\ref{thm:controllability}(b), confined to $\mathcal{W}$, halts within $\dim\mathcal{W}$ steps at an invariant slice: a halting slice exists because the full slice $z_0-\mathcal{W}$ is itself invariant, the slice continued through any of its points remaining in $\mathcal{W}$. Theorem~\ref{thm:controllability}(a) then places the iterate on a critical point within the halting slice.
\end{proof}

\begin{corollary}[Generalized linear models]
\label{cor:glm}
Let $f(z)=\sum_{i=1}^{m}\ell_i(a_i^\top z)$ with each $\ell_i\in C^2$ and data matrix $A=[a_1,\dots,a_m]^\top$, and suppose $f$ is coercive on the row space $\mathcal{R}=\operatorname{row}(A)$. For any start $z_0\in\mathcal{R}$, for instance $z_0=0$,
\begin{equation}
V(z_0)\le V_0(z_0)\le\operatorname{rank}(A),
\label{eq:glm}
\end{equation}
independent of the ambient dimension $n$: the per-instance count is set by the rank of the data, not by $n$.
\end{corollary}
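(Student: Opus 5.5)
The plan is to reduce the corollary to Lemma~\ref{lem:freeze} with $\mathcal{W}=\mathcal{R}$, so the argument comes down to one structural fact: every gradient of a generalized linear model lies in the row space. The chain rule gives
\begin{equation*}
\nabla f(z)=\sum_{i=1}^m \ell_i'(a_i^\top z)\,a_i=A^\top \ell'(Az)\in\mathcal{R}
\end{equation*}
for every $z\in\R^n$. Similarly, $\nabla^2 f(z)=A^\top D(z)A$ with $D(z)=\operatorname{diag}(\ell_i''(a_i^\top z))$, so the Hessian maps $\R^n$ into $\mathcal{R}$ and annihilates $\mathcal{R}^\perp=\ker A$.

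First I would show by induction on $k$ that a start $z_0\in\mathcal{R}$ keeps the whole trajectory in $\mathcal{R}$. Each $z_{k+1}$ is $z_0$ minus a combination of gradients $g_j=\nabla f(z_j)\in\mathcal{R}$. Hence $S_K(z_0)\subseteq\mathcal{R}$ for every $K$ and every schedule, and every reachable slice $z_0-S_K(z_0)$ lies in $\mathcal{R}$ because $z_0\in\mathcal{R}$. This is exactly the confinement hypothesis of Lemma~\ref{lem:freeze} with $\mathcal{W}=\mathcal{R}$, and $\dim\mathcal{W}=\operatorname{rank}(A)$.

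Second, I would dispose of the degenerate case $\nabla f(z_0)=0$, where $V_0(z_0)=0$ and the bound is trivial. Otherwise Lemma~\ref{lem:freeze} applies with the coercivity of $f$ on $\mathcal{R}$ assumed in the statement, and it gives $V_0(z_0)\le\dim\mathcal{R}=\operatorname{rank}(A)$. The remaining inequality $V(z_0)\le V_0(z_0)$ holds because $\Crit(f)\subseteq\T_\varepsilon$, as recorded after Definition~\ref{def:value}.

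The step needing the most care is checking that the lemma's halting argument really applies inside $\mathcal{R}$. Concretely, the full slice $z_0-\mathcal{R}=\mathcal{R}$ must be $\nabla^2 f$-invariant in the sense of~\eqref{eq:invariance}. This holds because the Hessian's image lies in $\mathcal{R}$, so the projection onto $\mathcal{R}^\perp$ of $\nabla^2 f(z)v$ vanishes. Equivalently, the gradients never leave $\mathcal{R}$, so greedy growth must stop within $\operatorname{rank}(A)$ steps at a slice whose infimum is attained by coercivity. That slice therefore contains a critical point of $f$ by the first part of Lemma~\ref{lem:freeze}.

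Independence from $n$ is then immediate, since only $\operatorname{rank}(A)$ enters the bound. The claim about $z_0=0$ follows because $0\in\mathcal{R}$ trivially.
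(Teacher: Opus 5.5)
Your proposal is correct and follows the paper's own proof. Gradients and Hessian images lie in $\mathcal{R}$, so every slice reachable from $z_0\in\mathcal{R}$ stays in the $\nabla^2 f$-invariant subspace $\mathcal{R}$. Lemma~\ref{lem:freeze} with $\mathcal{W}=\mathcal{R}$ then gives $V_0(z_0)\le\operatorname{rank}(A)$, and $V\le V_0$ follows from $\Crit(f)\subseteq\T_\varepsilon$. You also handle the case $\nabla f(z_0)=0$ explicitly; the lemma excludes that case, and the paper leaves it implicit.
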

\begin{proof}
Every gradient $\nabla f(z)=\sum_i \ell_i'(a_i^\top z)\,a_i$ lies in $\mathcal{R}$, and $\nabla^2 f(z)=A^\top\operatorname{diag}\bigl(\ell_i''(a_i^\top z)\bigr)A$ maps $\R^n$ into $\mathcal{R}$, so every slice reachable from $z_0\in\mathcal{R}$ lies in $\mathcal{R}$ and $\mathcal{R}$ is $\nabla^2 f$-invariant. Since $f$ is coercive on $\mathcal{R}$, Lemma~\ref{lem:freeze} gives $V_0(z_0)\le\dim\mathcal{R}=\operatorname{rank}(A)$, and $V\le V_0$ since a critical point lies in $\T_\varepsilon$. As $f$ is constant on the fibres $z+\ker A$, its critical set is $z^\star+\ker A$, an affine subspace meeting $\mathcal{R}$ only at $z^\star$; when $f|_{\mathcal{R}}$ is strictly convex, $z^\star$ is the unique minimizer on $\mathcal{R}$ and the minimum-norm minimizer of $f$.
\end{proof}

\begin{example}[Rank-two logistic regression]
\label{ex:logistic}
Take the logistic loss 
\begin{equation}
    f(z)=\sum_{i=1}^m\log(1+e^{-y_i a_i^\top z})
    \label{eq:log_loss}
\end{equation}
on a rank-two design $A\in\R^{m\times n}$, $m=300$, $n=200$, with orthonormal row space and in-row-space condition number $\sqrt{213}\approx14.6$. The labels are $y_i=\operatorname{sign}(\rho_i-\sigma(s_i^\top u_{\mathrm{true}}))$ with $\rho_i$ uniform on $[0,1]$, $s_i\in\R^2$ the coordinates of $a_i$ in an orthonormal basis of $\operatorname{row}(A)$, and $u_{\mathrm{true}}=(0.9,0)$, non-separable; the start is $z_0=0\in\mathcal{R}$, $\mathcal{R}=\operatorname{row}(A)$. Restricted to $\mathcal{R}$, $f$ is strongly convex with certified restricted condition number $\kappa\approx213$, the square of the in-row-space condition number because the logistic weights coincide at $z_0=0$. By Cor.~\ref{cor:glm} the value is at most $\operatorname{rank}(A)=2$, independent of the ambient dimension. The benchmark procedure of Sec.~\ref{subsec:benchmark} realizes it: the nonlinear least-squares solve over the terminal coefficients, using only $f$ and $\nabla f$ along the trajectory and no knowledge of $z^\star$, places $z_2$ at the minimizer to machine precision, run to tolerance $\varepsilon=10^{-10}$ in Fig.~\ref{fig:glm}. The two-query count is the clairvoyant benchmark of Sec.~\ref{subsec:benchmark}, not a runnable two-step method. For scale, gradient descent with step $2/(\mu+L)$ and the triple-momentum method~\cite{VanScoyFreemanLynch2018} with its standard constants, tuned to the restricted curvature, take about $1260$ and $190$ gradient queries to the same tolerance. The per-instance value lies far below the rate the class-optimal methods incur on the very same instance, set by the conditioning rather than the rank, with no contradiction to the accelerated lower bound, which governs the hardest instance of the class~\cite{NemirovskiYudin1983,Nesterov2018}.
\end{example}
\begin{figure}[t]
\centering
\includegraphics[width=0.99\columnwidth]{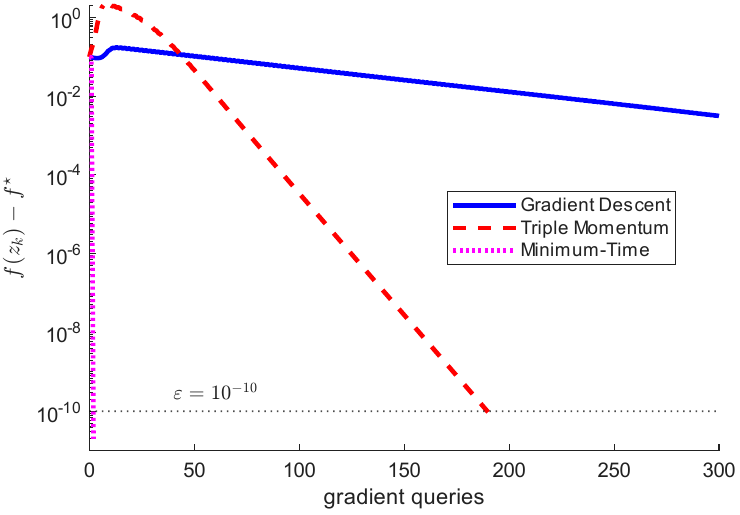}
\caption{Minimum-time schedule on the rank-two logistic instance of~\eqref{eq:log_loss}. 
The schedule, constructed by the benchmark procedure of Sec.~\ref{subsec:benchmark}, reaches the minimizer at the second query, the value $\operatorname{rank}(A)=2$ of Cor.~\ref{cor:glm}. 
Triple momentum and gradient descent take about $190$ and $1260$ queries, respectively.}
\label{fig:glm}
\end{figure}

The quadratic value of Sec.~\ref{subsec:convex} and the bound~\eqref{eq:glm} are one phenomenon: in each the reachable span is confined to a proper subspace forced by the instance, a $\nabla^2 f$-invariant subspace in the quadratic case and the row space of the data here, and $V(z_0)$ is at most the dimension of that subspace rather than scaling with the ambient dimension $n$. The confinement is not a consequence of convexity. The instance $f(z)=\log(1+\|z\|^2)$ is real-analytic with unique critical point $z^\star=0$, yet nonconvex, its radial Hessian eigenvalue $2(1-\|z\|^2)/(1+\|z\|^2)^2$ negative for $\|z\|>1$; its gradient $2z/(1+\|z\|^2)$ is parallel to $z-z^\star$ at every $z$, so every reachable slice lies on the line $\R z_0$ and the single step~\eqref{eq:fo-class} with $c=\inner{z_0}{g_0}/\|g_0\|^2$ places $z_1=z^\star$: $V_0(z_0)=1$ at every start $z_0\notin\T_\varepsilon$, in every dimension, with the realizing coefficient a function of the observed pair alone.

\subsection{The curvature shortcut}
\label{subsec:curvature}

We call the quadratic $\tfrac12(z-z^\star)^\top\nabla^2 f(z^\star)(z-z^\star)$, with the curvature of $f$ at its minimizer, the matched quadratic of $f$, and its controllability index in the sense of Sec.~\ref{sec:necessary} the matched controllability index. On a quadratic the value equals this index. Off the quadratic, curvature can lower it: we give a coplanarity determinant whose simple nonzero roots certify that a $(d-1)$-step slice already meets the target, one step below the index, a reduction the matched quadratic cannot achieve.

Throughout this subsection we write $d$ for the ambient dimension; let $f\in C^3(\R^d)$, $d\ge3$, be strongly convex with minimizer $z^\star$, and let $z_0$ have $g_0:=\nabla f(z_0)\neq0$ and $w_0:=z_0-z^\star$. Let $c$ be the first-step coefficient, $z_1(c)=z_0-c\,g_0$, and for $d\ge4$ fix an admissible continuation, a choice of the remaining $d-3$ steps for which the gradients $g_0,g_1(c),\dots,g_{d-2}(c)$ stay independent on an interval of $c$; for $d=3$ no continuation is needed. Set
\begin{equation}
\Psi(c)=\det\!\big[\,w_0,\ g_0,\ g_1(c),\ \dots,\ g_{d-2}(c)\,\big].
\label{eq:coplanarity}
\end{equation}
Each $g_j(c)$ is $C^2$ in $c$ along the continuation, so $\Psi$ is $C^2$, with roots that depend on the continuation chosen; for $d=3$, $\Psi$ is intrinsic.

\begin{lemma}[Coplanarity criterion]
\label{lem:coplanarity}
A $(d-1)$-step trajectory with first step $c$ and the fixed continuation reaches $z^\star$ if and only if $\Psi(c)=0$; its reachable slice is $(d-1)$-dimensional if and only if $g_0,\dots,g_{d-2}(c)$ are independent.
\end{lemma}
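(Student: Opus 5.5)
The plan is to reduce the lemma to the affine-slice characterization of Theorem~\ref{thm:controllability}(a). Since $f$ is strongly convex, $\Crit(f)=\{z^\star\}$. For a $(d-1)$-step trajectory with first step $c$ and the fixed continuation, the query points $z_0,z_1(c),\dots,z_{d-2}(c)$ are determined. The terminal coefficients $c_{d-2,j}$ are free, so the achievable terminal iterates form exactly the slice $A_{d-1}=z_0-S_{d-1}(z_0)$ with $S_{d-1}=\operatorname{span}\{g_0,g_1(c),\dots,g_{d-2}(c)\}$. By part~(a), the trajectory reaches $z^\star$ for some choice of terminal coefficients iff $w_0=z_0-z^\star\in S_{d-1}$. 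So the lemma comes down to showing that this membership is equivalent to $\Psi(c)=0$.

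First I will dispose of the second claim, which is immediate. The slice $A_{d-1}$ has dimension $\dim S_{d-1}$, and this equals $d-1$ exactly when the $d-1$ spanning gradients are linearly independent.

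For the first claim I work on the interval of $c$ where the continuation is admissible, so $g_0,\dots,g_{d-2}(c)$ are independent.
\begin{itemize}
\item If $w_0\in S_{d-1}$, the $d$ columns of $[w_0,g_0,\dots,g_{d-2}(c)]$ in $\R^d$ are dependent, so $\Psi(c)=0$.
\item Conversely, if $\Psi(c)=0$, some nontrivial combination $\lambda w_0+\sum_j\mu_j g_j=0$ exists. Independence of the gradients forces $\lambda\neq0$, hence $w_0\in S_{d-1}$.
\end{itemize}
This is the standard fact that a vector lies in the span of $d-1$ independent vectors in $\R^d$ iff the $d\times d$ determinant vanishes.

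The only delicate point is the direction ``reaches $z^\star$ $\Rightarrow$ $\Psi(c)=0$'' when the gradients degenerate. If independence fails, $\Psi(c)=0$ holds trivially, so this direction needs no independence at all. The converse genuinely requires the independence supplied by admissibility; otherwise $\Psi(c)=0$ could come from dependence among the gradients rather than from $w_0$ lying in their span. I will therefore state the ``if'' direction on the admissible interval, which is where the lemma's hypothesis places $c$. For $d=3$ no continuation is involved: $\Psi(c)=\det[w_0,g_0,g_1(c)]$, and the same argument applies with $S_2=\operatorname{span}\{g_0,g_1(c)\}$.
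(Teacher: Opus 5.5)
Your proof is correct and is essentially the paper's own argument: the $(d-1)$-step iterate set is the slice $z_0-S(c)$, which contains $z^\star$ iff $w_0\in S(c)$, iff the columns of $\Psi$ are dependent. Restricting the converse to the range of $c$ where $g_0,\dots,g_{d-2}(c)$ are independent is needed, and the paper's one-line proof leaves this implicit in its definition of an admissible continuation. For example, at $c=0$ one has $g_1(0)=g_0$, so $\Psi(0)=0$ without any capture.
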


\begin{proof}
The reachable slice is $S(c)=\operatorname{span}\{g_0,g_1(c),\dots,g_{d-2}(c)\}$ and the $(d-1)$-step iterate set is $z_0-S(c)$, which contains $z^\star$ if and only if $w_0\in S(c)$, that is, if and only if the $d$ columns of~\eqref{eq:coplanarity} are linearly dependent, $\Psi(c)=0$.
\end{proof}

Lemma~\ref{lem:coplanarity} certifies reaching in $d-1$ steps; the value can fall further, to a floor set by the least horizon whose trajectory carries enough free step coefficients to place the target in the reachable slice. Define
\begin{equation}
K^\star(d)=\Big\lceil\tfrac{\sqrt{8d+1}-1}{2}\Big\rceil=\min\bigl\{K\ge1:\ \tbinom{K+1}{2}\ge d\bigr\},
\label{eq:kstar}
\end{equation}
the least $K$ whose triangular number $\tbinom{K+1}{2}$ reaches $d$, with $K^\star(d)=K$ on $\tbinom{K}{2}<d\le\tbinom{K+1}{2}$.

\begin{theorem}[Curvature shortcut]
\label{thm:curvature}
Let $f$ be strongly convex on $\R^d$, $d\ge3$, with minimizer $z^\star$, and let $K^\star(d)$ be as in~\eqref{eq:kstar}.
\begin{enumerate}
\item[(a)] If $f\in C^3$ and, at a start $z_0$, the matched-quadratic grade of $w_0$ is $d$ and $\Psi$ in~\eqref{eq:coplanarity} has a simple zero $c^\star\neq0$ at which $g_0,\dots,g_{d-2}(c^\star)$ are independent, then $V_0(z_0)\le d-1$, and $V_0\le d-1$ on a neighborhood of $z_0$; the matched quadratic $\tfrac12(z-z^\star)^\top\nabla^2 f(z^\star)(z-z^\star)$ has $V_0=d$, so the reduction is strictly nonlinear.
\item[(b)] If $f$ is real-analytic, then for $f$ in a residual set of the strongly convex real-analytic objectives, $V_0(z_0)\ge K^\star(d)$ for almost every $z_0$; and there exist such $f$ and a nonempty open set of starts on which $V_0(z_0)=K^\star(d)$.
\item[(c)] $K^\star(d)=d-1$ for $d\in\{3,4\}$, while $K^\star(d)\le d-2$ for $d\ge5$, and $d-K^\star(d)\to\infty$.
\end{enumerate}
\end{theorem}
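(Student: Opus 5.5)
Part (a) follows from Lemma~\ref{lem:coplanarity} combined with the implicit function theorem. At $c^\star$ the determinant $\Psi(c^\star)=0$ and the gradients $g_0,\dots,g_{d-2}(c^\star)$ are independent, so $w_0$ lies in their span. The slice $z_0-S(c^\star)$ therefore contains $z^\star$, and Theorem~\ref{thm:controllability}(a) gives $V_0(z_0)\le d-1$.

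For the neighborhood claim, I would regard $\Psi$ as a function $\Psi(c;z_0)$ of both arguments. It is $C^1$ jointly, because $f\in C^3$ makes each $g_j$ $C^2$ in $(c,z_0)$ along the fixed continuation, and $w_0=z_0-z^\star$ is affine in $z_0$. Since the zero is simple, $\partial_c\Psi(c^\star;z_0)\neq0$, and the implicit function theorem yields a root $c(z_0')$ for all $z_0'$ near $z_0$. Independence of the gradients is an open condition, so it persists, and the same argument gives $V_0\le d-1$ on the neighborhood. The comparison with the matched quadratic is Theorem~\ref{thm:selection}: a grade-$d$ start there has $V_0=d$, so the reduction cannot be produced by the matched quadratic.

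Part (b) rests on a dimension count. A $K$-step trajectory is determined by the coefficient array $\{c_{k,j}\}_{0\le j\le k<K}$, which has $\binom{K+1}{2}$ entries. The condition $z_K=z^\star$ imposes $d$ equations. Consider the map
\[
\Phi_K:(z_0,c)\mapsto z_K-z^\star .
\]
The set of starts that reach $z^\star$ in $K$ steps is the projection of $\Phi_K^{-1}(0)$, a set of dimension at most $d+\binom{K+1}{2}-d$. When $\binom{K+1}{2}<d$ this is less than $d$, so the projection has measure zero.

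To make this rigorous for generic $f$, I would use a parametric transversality (Thom) argument over real-analytic perturbations of $f$. The aim is to show that $0$ is a regular value of $\Phi_K$ for $f$ in a residual set, by perturbing $f$ near the query points so that the gradients move freely. Then $\Phi_K^{-1}(0)$ is a manifold of dimension $\binom{K+1}{2}<d$, and its projection is null by Sard. Taking the union over $K<K^\star(d)$ gives the lower bound for almost every $z_0$.

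For the attaining example I would build $f$ so that at some point $\Phi_{K^\star}$ is a submersion in the coefficient variables alone, with $\binom{K^\star+1}{2}\ge d$. The surjectivity theorem then gives an open set of starts. A concrete choice is a strongly convex quadratic plus a small generic cubic-type analytic perturbation, and I would verify full rank of $D_c\Phi$ by direct computation at a chosen point. This is the main obstacle. Real-analytic perturbations are not local, so the genericity step needs a jet-transversality theorem adapted to the analytic category, or an approximation argument in the Whitney topology. Exhibiting the rank condition explicitly in every dimension $d$ also requires an inductive construction of the trajectory, adding one coefficient row per step.

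Part (c) is arithmetic from \eqref{eq:kstar}. For $d=3$ we have $\binom{2}{2}=1<3\le\binom{3}{2}=3$, so $K^\star=2$. For $d=4$, $\binom{3}{2}=3<4\le\binom{4}{2}=6$, so $K^\star=3$. For $d\ge5$, $\binom{d-1}{2}\ge d$ is equivalent to $d^2-5d+2\ge0$, which holds for $d\ge5$, so $K^\star\le d-2$. Finally $K^\star(d)\le\sqrt{2d}+1$, hence $d-K^\star(d)\to\infty$.
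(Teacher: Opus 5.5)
Your parts (a) and (c) are correct and follow the paper. In (a) you get $V_0=d$ for the matched quadratic by citing Theorem~\ref{thm:selection}. That is legitimate, since strong convexity gives $\nabla^2 f(z^\star)\succ0$; the paper instead checks directly that $\Psi_{\mathrm q}(c)\neq0$ whenever the gradients are independent.

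Part (b) has the paper's architecture: a zero set of dimension $\binom{K+1}{2}$, parametric transversality, and the implicit function theorem. However, it leaves both steps that carry the argument as intentions.

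For the submersion, the missing device is the choice of index. At a zero with $z_0\neq z^\star$ some terminal coefficient is nonzero, since $z_0-z^\star=\sum_i c_ig_i$. Take $j$ largest with $c_j\neq0$ and add $\langle v,z-z_j\rangle\chi(z)$, with $\chi=1$ near $z_j$ and supported away from $z_0,\dots,z_{j-1}$. This fixes $z_0,\dots,z_j$ and shifts $g_j$ by $v$. Otherwise it disturbs only gradients whose terminal coefficients vanish, so your $\Phi_K$ moves by exactly $-c_jv$ and sweeps $\R^d$. Letting ``the gradients move freely'' does not give this by itself. Shifting $g_i$ with $i<j$ also moves $z_{i+1},\dots,z_j$, and hence gradients with nonzero coefficients, so $\Phi_K$ moves by $Mv$ for a linear map $M$ you do not control.

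Attainment also needs no dimension-by-dimension computation. At $K=K^\star(d)$ with independent gradients, the terminal coefficients already cover $S_K(z_0)$. Reaching therefore reduces to the $d-K$ equations $P^\perp z_0=0$ in the $\binom{K}{2}$ interior coefficients, which is equivalent to your surjectivity of $D_c\Phi_K$. The paper obtains rank $d-K$ ``by the same construction'', i.e.\ by localized perturbation rather than computation: quadratic bumps at the interior iterates prescribe $\nabla^2 f(z_k)$ without moving the trajectory. It then applies the implicit function theorem.

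Your concern about the analytic category is well founded, and the paper does not resolve it. Its cutoff $\chi$ is $C^\infty$ but not real-analytic, so its argument as written proves genericity only among smooth strongly convex objectives.

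The existence half is easy to repair. Strong convexity and a zero of the reaching system with surjective derivative both survive perturbations of $f$ that are small in the fine $C^2$ topology. Whitney's analytic approximation theorem therefore turns a smooth witness into a real-analytic one.

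For the lower bound you can stay analytic by replacing the bump with the finite-dimensional family $q(z)e^{-\|z\|^2}$, with $q$ a polynomial of bounded degree. This family has bounded Hessian. Hermite interpolation of the $1$-jets of $q$ at the distinct iterates gives zero gradient at $z_0,\dots,z_{j-1}$ and gradient $v$ at $z_j$. Parametric transversality then yields the bound for almost every parameter, which is a measure-theoretic rather than residual genericity.

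Two smaller points:
\begin{itemize}
\item A cubic perturbation of a quadratic is not globally strongly convex. Use bounded-Hessian analytic terms instead, such as the $\log\cosh$ terms of Example~\ref{ex:curvature}.
\item The implicit function theorem gives only $V_0\le K^\star(d)$ on the open set. Equality holds there only almost everywhere, and only if the witness also lies in the generic class. The paper passes over this point as well.
\end{itemize}
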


The proof is deferred to Appendix~\ref{app:thm2}. Part~(a) is a condition on the individual start $z_0$ that can be verified on a given instance; part~(b) is a statement about the typical objective, holding for all $f$ outside a negligible subset of the class and for almost every start. Together they place the per-instance floor at $K^\star(d)$: off a measure-zero set no schedule of~\eqref{eq:fo-class} reaches the target in fewer than $K^\star(d)$ steps, and the floor is met on an open set. By~(c) the one-step reduction of part~(a) is confined to $d\le4$; from $d=5$ onward it compounds, and the gap below the matched controllability index grows without bound, of order $d-\sqrt{2d}$. Figure~\ref{fig:staircase} traces the two counts across dimension: the matched index grows as $d$ while the floor $K^\star(d)$ grows as $\sqrt{2d}$, and the markers, the smallest $V_0$ observed over random real-analytic instances at each $d$, sit on the floor. The lower bound in~(b) is generic, not claimed for every real-analytic strongly convex $f$. As with the quadratic statements, this is the exact-reaching value $V_0$; the tolerance value $V$ agrees with it below the smallest nonzero gradient gap along the trajectory, as after Thm.~\ref{thm:controllability}.

A small value is a statement that the instance carries little reachable structure, not a bound forcing any method to be slow: it certifies curvature as a resource, an instance intrinsically easier to reach than its matched quadratic. The floor is the clairvoyant optimum of Sec.~\ref{subsec:benchmark}, generally not achieved by a runnable schedule; the gap a given method leaves to it is the price of causality.

\begin{figure}[t]
\centering
\includegraphics[width=0.99\columnwidth]{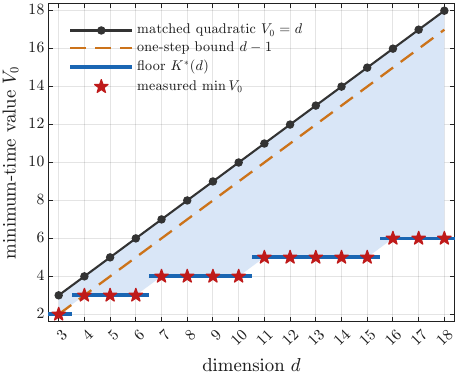}
\caption{Per-instance minimum-time value $V_0$ across dimension $d$. The curvature floor $K^\star(d)$ of~\eqref{eq:kstar} falls strictly below the matched controllability index $d$, the gap growing without bound (Thm.~\ref{thm:curvature}); the markers, the smallest $V_0$ observed over random real-analytic instances at each $d$, coincide with $K^\star(d)$.}
\label{fig:staircase}
\end{figure}

For $d=3$, \eqref{eq:coplanarity} is the triple product $\Phi(c)=\inner{z_0-z^\star}{g_0\times g_1(c)}$ with $g_1(c)=\nabla f(z_0-c\,g_0)$; the matched quadratic gives $\Phi_{\mathrm q}(c)=-c\,\det[w_0,Qw_0,Q^2w_0]$, which vanishes only at $c=0$. Theorem~\ref{thm:curvature} then yields $V_0(z_0)=2<3$ whenever $\Phi$ has a simple nonzero root at which $g_0$ and $g_1$ are independent and $z^\star-z_0\notin\operatorname{span}\{g_0\}$.

\begin{example}[Curvature shortcut in dimension three]
\label{ex:curvature}
Consider
\begin{equation}
\label{eq:probeinstance}
f(z)=\tfrac12 z^\top A z+\gamma\sum_{i=1}^{3}\log\cosh(z_i),
\end{equation}
with $A=\operatorname{diag}(1.38,2.32,4.75)$, $\gamma=\tfrac32$, and $z^\star=0$, real-analytic and $\mu$-strongly convex with $\mu=1.38$. At $z_0=(1,\tfrac1{10},-\tfrac9{10})$ the matched quadratic $\tfrac12 z^\top(A+\gamma I)z$ has grade three, while $\Phi$ has two simple nonzero roots, of which $c^\star\approx-2.44$ has sign opposite to any descent step, at which two coefficients place $z_2=z^\star$; so $V_0(z_0)=2<3$ and the shortcut persists on a neighborhood. Figure~\ref{fig:curvature} plots both determinants against the first-step coefficient: $\Phi$ crosses zero at two simple nonzero roots, each certifying a two-step schedule, while the matched quadratic's $\Phi_{\mathrm q}$ is linear in $c$ and vanishes only at $c=0$. The two greedy first steps, exact-line-search steepest descent and the least-squares step $c=\inner{z_0-z^\star}{g_0}/\|g_0\|^2$, are both positive, so the minimum-time direction is one no descent step probes.
\begin{figure}[t]
\centering
\includegraphics[width=0.99\columnwidth]{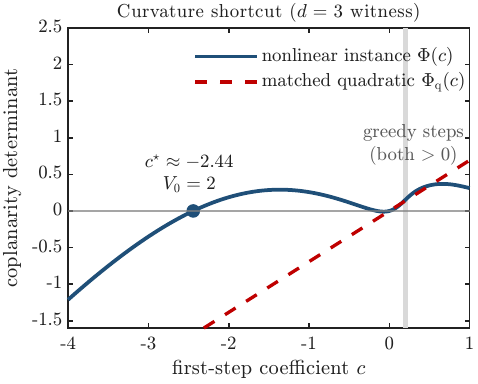}
\caption{Curvature shortcut on $d=3$ for the nonlinear instance~\eqref{eq:probeinstance}: the coplanarity determinant $\Phi(c)$ has two simple nonzero roots, at either of which a two-step schedule reaches $z^\star$, so $V_0(z_0)=2$, one below the matched controllability index, whereas the matched quadratic $\Phi_{\mathrm q}(c)$ vanishes only at $c=0$. The shortcut root $c^\star$ has sign opposite to both greedy first steps, so the minimum-time direction is one no descent step probes; see Example~\ref{ex:curvature} for the values.}
\label{fig:curvature}
\end{figure}
\end{example}

\begin{remark}[Scope]
\label{rem:curvature-scope}
The certificate of part~(a) applies inside any $d$-dimensional reachable structure, for instance the rank-$d$ generalized linear model of Cor.~\ref{cor:glm}, on an open set of starts of positive measure.
\end{remark}

\begin{corollary}[Computable certificate]
\label{cor:certificate}
For $f\in C^2$ and a stationarity-reachable start $z_0$ (Definition~\ref{def:reachable}), the value obeys $V(z_0)\le V_0(z_0)\le\dim\Grad(z_0)$, and $\dim\Grad(z_0)$ is evaluable from first-order data alone as the dimension at which the reachable gradient span~\eqref{eq:span} stabilizes; for the generalized linear model of Cor.~\ref{cor:glm} it equals $\operatorname{rank}(A)$.
\end{corollary}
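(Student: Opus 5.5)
The three claims are proved in order: $V(z_0)\le V_0(z_0)$, then $V_0(z_0)\le\dim\Grad(z_0)$, then the evaluation of $\dim\Grad(z_0)$ from first-order data, including the case of Cor.~\ref{cor:glm}.

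The first inequality is immediate. Every critical point lies in $\T_\varepsilon$, so a schedule that reaches $\Crit(f)$ at horizon $K$ also reaches $\T_\varepsilon$ at that horizon.

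For the second inequality, set $D=\dim\Grad(z_0)$, which is at most $n$. By stationarity-reachability, $V_0(z_0)=K<\infty$, and Thm.~\ref{thm:controllability}(a) provides a trajectory whose slice $z_0-S_K(z_0)$ meets $\Crit(f)$. The span $S_K(z_0)$ lies in $\Grad(z_0)$ by definition~\eqref{eq:span}, so the slice lies in the affine set $z_0-\Grad(z_0)$. The key step is to prune the trajectory so that every step raises the span dimension, which bounds the minimal horizon by the span dimension. Suppose a minimal realizing trajectory had $\dim S_K<K$. Then some query $g_k$ lies in $S_k$, which gives $S_{k+1}=S_k$. I would delete that query: place $z_{k+1}$ directly at the position the next step would have chosen. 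The coefficients are free, and $g_k$ is a combination of earlier gradients, so every later iterate stays expressible in the smaller span. This yields a shorter realizing schedule and contradicts minimality. Hence $K=\dim S_K(z_0)\le\dim\Grad(z_0)$.

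The main obstacle is this pruning step. Removing a query shifts the indexing of all later iterates, so I would need to check that each later iterate still lies in the affine slice generated by the retained gradients. It does, because every later coefficient on $g_k$ can be redistributed onto the earlier gradients, which keeps the same trajectory of query points minus one. I would verify this carefully by writing $g_k=\sum_{j<k}\alpha_j g_j$ and substituting into~\eqref{eq:fo-class}.

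For evaluability, $\Grad(z_0)$ is the union over trajectories of the spans $S_K$. Lemma~\ref{lem:spangrowth} shows the greedy growth raises the dimension by one whenever the current slice is not invariant, and freezes otherwise. The dimension at which the observed span stops growing is therefore reached using gradients alone, and it is at most $n$. I would argue that the greedy process attains the maximal dimension: any non-invariant slice admits growth, so a frozen slice must be invariant, and reachable points then stay inside it. This identifies the stabilized dimension with $\dim\Grad(z_0)$.

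For the generalized linear model of Cor.~\ref{cor:glm}, all gradients lie in $\mathcal{R}$, which gives $\dim\Grad(z_0)\le\operatorname{rank}(A)$. Equality then follows once the greedy process does not freeze before filling $\mathcal{R}$, which I expect needs a genericity remark in the spirit of Proposition~\ref{prop:generic}.
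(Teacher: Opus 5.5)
Your proof is correct. For the central inequality $V_0(z_0)\le\dim\Grad(z_0)$ it takes a different route from the paper.

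The paper argues forward. It grows the span greedily by Lemma~\ref{lem:spangrowth} until it freezes, notes that stationarity-reachability excludes a frozen slice disjoint from $\Crit(f)$, and lands on a critical point by Thm.~\ref{thm:controllability}(a).

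You argue backward from a minimal reaching schedule. If $g_k\in\operatorname{span}\{g_0,\dots,g_{k-1}\}$, you delete the query point $z_k$ and re-index. The retained points and their gradients are unchanged, and $\operatorname{span}\{g_0,\dots,g_{k-1},g_{k+1},\dots,g_i\}=\operatorname{span}\{g_0,\dots,g_i\}$ keeps every later point admissible. So the pruned schedule reaches $\Crit(f)$ one step sooner, contradicting minimality.

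What your route buys:
\begin{itemize}
\item It uses stationarity-reachability exactly as defined, namely that one reaching schedule exists.
\item It needs no Hessian.
\item It gives the sharper fact that a minimal schedule has independent gradients, so $V_0(z_0)=\dim S_{V_0}(z_0)$.
\end{itemize}

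The greedy route instead ties the bound to the same stabilization that gives evaluability. However, its step ``reachability excludes a frozen slice disjoint from the critical set'' tacitly relies on a trapping fact, and you need the same fact. If $\nabla f(z)\in S$ for all $z\in z_0-S$, then \emph{every} trajectory from $z_0$ stays in $z_0-S$; this follows by induction starting from $z_0\in z_0-S$. Write that induction out in your evaluability step. The last clause of Lemma~\ref{lem:spangrowth} covers only continuations of the given trajectory. The trapping fact is what yields $\Grad(z_0)\subseteq S_K\subseteq\Grad(z_0)$ for the frozen greedy span, and hence that every greedy run stabilizes at $\dim\Grad(z_0)$.

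On the generalized linear model, you are right to stop at $\dim\Grad(z_0)\le\operatorname{rank}(A)$. Read literally as $\dim\Grad(z_0)=\operatorname{rank}(A)$, the clause fails. Take $A=I_2$, $\ell_i(t)=\tfrac12(t-b_i)^2$ and $b\neq0$, so $f(z)=\tfrac12\|z-b\|^2$, which is coercive. The gradient at $z_0-c\,g_0$ is $(1-c)(z_0-b)$, so $\dim\Grad(z_0)=1<2$ at every non-critical start.

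The paper's proof of this clause only cites Cor.~\ref{cor:glm}, which gives exactly the upper bound you derived, so the upper bound is what should be claimed. Since the counterexample fails at every start, a genericity argument over $z_0$ in the spirit of Proposition~\ref{prop:generic} would not restore equality. Genericity would have to be over the losses or the data.
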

\begin{proof}
The bound $V(z_0)\le V_0(z_0)$ holds because a critical point lies in $\T_\varepsilon$. For $V_0(z_0)\le\dim\Grad(z_0)$: every iterate lies in $z_0-\Grad(z_0)$, and by Lemma~\ref{lem:spangrowth} the reachable span grows by one dimension at each step until it meets the critical set or freezes; reachability excludes a frozen slice disjoint from the critical set, so a critical point is reached within $\dim\Grad(z_0)$ steps. The span~\eqref{eq:span} is generated by the gradients along trajectories from $z_0$ and stabilizes at $\dim\Grad(z_0)$, hence is obtained from first-order evaluations alone. The generalized-linear-model identity is Cor.~\ref{cor:glm}.
\end{proof}

\begin{remark}[Reach of the certificate]
\label{rem:computable}
A deficiency $\dim\Grad(z_0)<n$ is certified by any nonzero $w$ orthogonal to every reachable gradient; when a proper subspace is $\nabla^2 f$-invariant along the reachable slices, $w$ is a common eigendirection of $\nabla^2 f$ along those slices orthogonal to $g_0$, the first-order reading of the Popov--Belevitch--Hautus test. The value $V_0$ itself is not certified this way from local data: by the curvature shortcut of Thm.~\ref{thm:curvature} it can fall below the grade of $g_0$ with respect to $\nabla^2 f(z_0)$, so no local bound read from the Hessian spectrum at $z_0$ pins $V_0$ from below; the benchmark's clairvoyance is intrinsic, not an artifact of the definition.
\end{remark}

\section{Conclusion}
\label{sec:conclusion}

This paper cast per-instance first-order complexity as a discrete-time
minimum-time optimal control problem: the iterate is the state, the span
coefficients are the control, the gradient-tolerance set is the target, and
the value of the resulting problem is the complexity of the instance.
Everything else follows from this formulation. Its optimality system, obtained by dynamic programming and the discrete Pontryagin principle through a nested formulation, reduces on a strongly convex quadratic to the residual-orthogonality (Galerkin) condition of conjugate gradient, whose iterates emerge as the minimum-time trajectory; the value is a controllability index, and controllability becomes the central invariant, the reachable span of the iteration being the controllable subspace and the per-instance complexity the number of steps needed to fill it. Off the quadratic a reachability test governs feasibility through a Hessian-generated span and bounds the value by the dimension of the reachable structure the instance carries, independent of the ambient dimension. The test also exposes curvature as a resource: a checkable condition drives the per-instance count strictly below the matched controllability index, and in higher dimension the reduction compounds to a floor of order $\sqrt{2d}$ whose gap to the index grows without bound, the quadratic being the boundary case at which the two agree. A minimax
inequality makes the hardest instance's per-instance value a lower bound on the
worst-case complexity of any class, loose precisely where the instance is
structured, of order $\nu$, the number of distinct eigenvalues, against $\sqrt{\kappa}\log(1/\varepsilon)$ on a
clustered quadratic spectrum. Beyond the quadratic the hardest-instance value is bounded by the largest reachable-span dimension the class carries rather than by a contraction rate; whether the gap stays strict on smooth nonconvex classes, by the dimension-versus-rate mechanism seen on the quadratic, is open and tied to nonconvex worst-case lower bounds. The value function is a benchmark for intrinsic
instance difficulty rather than a runnable method, its optimal schedule the
open-loop solution of the inner slice problem, in closed form on a quadratic and, off it, globally solvable but generally not in closed form.

\appendices

\section{Proof of Proposition~\ref{prop:generic}}
\label{app:thm1}
\begin{proof}
The first claim is Thm.~\ref{thm:controllability}(b) with the growth hypothesis holding at every slice, so no continuation freezes and the span reaches $\R^n$ in at most $n$ steps. For the genericity claim, write the reachable span as the stabilized limit of the ascending chain $R_1(z_0)\subseteq R_2(z_0)\subseteq\cdots$ with $R_1=\operatorname{span}\{\nabla f(z_0)\}$ and
\begin{align}
&R_{k+1}(z_0)=R_k(z_0) \notag\\
&+\operatorname{span}\{\nabla^2 f(z)\,u:\ z\in z_0-R_k(z_0),\ u\in R_k(z_0)\},
\label{eq:reachchain}
\end{align}
which equals $\Grad(z_0)$ at stabilization: by Lemma~\ref{lem:spangrowth} the chain stops growing exactly when $R_k$ is $\nabla^2 f$-invariant along its slice. For real-analytic $f$ the map $t\mapsto\nabla^2 f(z_0-tu)\,u$ is real-analytic, so by the identity theorem the span of its values over $t\in\R$ equals the span of its derivatives at $t=0$; the slice in~\eqref{eq:reachchain} may therefore be replaced by the iterated directional derivatives of $\nabla^2 f$ at the single point $z_0$, and, by polarization, each generator is a fixed multilinear form in the partial derivatives of $f$ at $z_0$. Hence for each order bound $N$ the subspace $R^{(N)}(z_0)$ generated using derivatives of order at most $N$ is the column span of a matrix $M_N(z_0)$ with real-analytic entries. Let 
\begin{equation}
    n_\star=\max_{z_0,N}\operatorname{rank}M_N(z_0), 
\end{equation}
an integer at most $n$, achieved at some $(z_0^\circ,N_0)$, and let $\Theta(z_0)$ be the sum of squares of the $n_\star\times n_\star$ minors of $M_{N_0}(z_0)$, real-analytic in $z_0$ with $\Theta(z_0^\circ)\neq0$. Since $\R^n$ is connected, the identity theorem gives that $\{\Theta=0\}$ has measure zero; off it $\operatorname{rank}M_{N_0}(z_0)=n_\star$, so $\dim\Grad(z_0)=n_\star$ there, while $\dim\Grad(z_0)\le n_\star$ everywhere, whence $\dim\Grad(z_0)=n_\star$ outside a set of measure zero. Finally, if $n_\star<n$ then $\Grad(z_0^\circ)$ is a proper subspace, $\nabla^2 f$-invariant along its slice by the stabilization of~\eqref{eq:reachchain}; contrapositively, the absence of such a subspace forces $n_\star=n$.
\end{proof}

\section{Proof of Theorem~\ref{thm:curvature}}
\label{app:thm2}
\begin{proof}
(a) At $c^\star$ the gradients are independent, so $S(c^\star)$ is a $(d-1)$-plane, and $\Psi(c^\star)=0$ gives $w_0\in S(c^\star)$ by Lemma~\ref{lem:coplanarity}: a $(d-1)$-step trajectory reaches $z^\star$, so $V_0(z_0)\le d-1$. Since $f\in C^3$, the gradients along the continuation are $C^2$ in $(z_0,c)$, so $\Psi$ is $C^2$; at $(z_0,c^\star)$ one has $\Psi=0$ and $\partial_c\Psi=\Psi'(c^\star)\neq0$, so by the implicit function theorem $c^\star$ continues to a $C^1$ map $z\mapsto c^\star(z)$ with $\Psi(z,c^\star(z))=0$ on a neighborhood, on which, the gradients remaining independent, $V_0\le d-1$. For the matched quadratic $\nabla f(z)=Q(z-z^\star)$ every gradient lies in $\mathcal K=\operatorname{span}\{Qw_0,\dots,Q^{d-1}w_0\}$, of dimension $d-1$ at grade $d$, with $w_0\notin\mathcal K$; at any $c$ with independent gradients these span $\mathcal K$, so the columns $w_0,g_0,\dots,g_{d-2}(c)$ are independent and $\Psi_{\mathrm q}(c)\neq0$, whence no $(d-1)$-step capture and $V_0=d$.

(b) Set $z^\star=0$. By~\eqref{eq:reachspan} a length-$K$ trajectory reaches the target if and only if $z_0\in S_K(z_0)$, with $g_j=\nabla f(z_j)$. Parametrize such a trajectory by the start $z_0$, the interior coefficients $\theta\in\R^{\binom{K}{2}}$ that fix $z_1,\dots,z_{K-1}$ through~\eqref{eq:fo-class}, and terminal coefficients $c\in\R^K$, and write the reaching condition as
\begin{equation}
\Xi(z_0,\theta,c)=z_0-\sum_{i=0}^{K-1}c_i\,g_i=0,\qquad \Xi:\ \R^{\,d+\binom{K}{2}+K}\to\R^d,
\label{eq:reachmap}
\end{equation}
real-analytic in its arguments; the starts admitting such a trajectory form the image of $\Xi^{-1}(0)$ under $\pi:(z_0,\theta,c)\mapsto z_0$. At any zero with $z_0\neq z^\star$ some terminal coefficient is nonzero, since $z_0=\sum_i c_i g_i$; let $j$ be the largest index with $c_j\neq0$, so that $c_{j+1}=\dots=c_{K-1}=0$ and $\Xi=z_0-\sum_{i\le j}c_i g_i$. For generic interior coefficients the iterates $z_0,\dots,z_{K-1}$ are distinct, and each $z_i$ is fixed through~\eqref{eq:fo-class} by the gradients observed strictly before it. Take $\delta f(z)=\langle v,z-z_j\rangle\,\chi(z)$ with $\chi$ a smooth cutoff equal to $1$ near $z_j$ and supported away from $z_0,\dots,z_{j-1}$; it leaves those earlier iterates, and hence the point $z_j$ itself, in place, adds $v$ to $\nabla f(z_j)$ while leaving $\nabla^2 f(z_j)$ unchanged, and preserves strong convexity for small $\|v\|$. This moves $\Xi$ by $-c_j v$, which ranges over all of $\R^d$, so $(f,z_0,\theta,c)\mapsto\Xi$ is a submersion at every zero, and by parametric transversality~\cite{GuilleminPollack1974} $0$ is a regular value of $\Xi$ for $f$ in a residual set. For such $f$, $\Xi^{-1}(0)$ is a real-analytic manifold of dimension $d+\tbinom{K}{2}+K-d=\tbinom{K+1}{2}$, so $\pi(\Xi^{-1}(0))$ has dimension at most $\tbinom{K+1}{2}$. With $K=K^\star(d)-1$, minimality in~\eqref{eq:kstar} gives $\tbinom{K+1}{2}=\tbinom{K^\star(d)}{2}<d$, so the starts reaching in fewer than $K^\star(d)$ steps, a finite union of such images, have measure zero, and $V_0(z_0)\ge K^\star(d)$ almost everywhere. For attainment, fix $K=K^\star(d)$, for which $\tbinom{K}{2}\ge d-K$ by~\eqref{eq:kstar}; reaching means $\Phi(z_0,\theta):=P^\perp z_0=0$ with $P^\perp$ the orthogonal projection onto $S_K(z_0)^\perp$, which is $d-K$ equations when $S_K(z_0)$ is $K$-dimensional. By the same construction there is such an $f$ and a solution $(z_0^\sharp,\theta^\sharp)$ at which $\partial\Phi/\partial\theta$ has rank $d-K$, and the implicit function theorem then solves $d-K$ coordinates of $\theta$ for a $C^1$ map $z_0\mapsto\theta(z_0)$ with $\Phi(z_0,\theta(z_0))=0$ near $z_0^\sharp$; there $V_0=K^\star(d)$, so the floor is met on an open set. For $d=3$ the lower bound is elementary and holds for every qualifying $f$: since $z^\star-z_0\notin\operatorname{span}\{g_0\}$, $w_0$ lies in no one-dimensional slice, so $V_0=2$.

(c) The triangular numbers satisfy $\tbinom{K+1}{2}\ge d$ first at $K=2$ for $d\in\{2,3\}$ and at $K=3$ for $d\in\{4,5,6\}$, so $K^\star(3)=2$ and $K^\star(4)=3$, matching $d-1$, while $K^\star(5)=K^\star(6)=3\le d-2$. For $d\ge5$ we have $\tbinom{d-1}{2}\ge d$, hence $K^\star(d)\le d-2$; and $K^\star(d)=\Theta(\sqrt d)$ by~\eqref{eq:kstar}, so $d-K^\star(d)\to\infty$.
\end{proof}

\section*{References}
\bibliographystyle{IEEEtran}
\bibliography{refs}




\end{document}